\newtheorem{dfntn}{Definition}[section] 
\newtheorem{rmrk}{Remark}[section]    
\newtheorem{thrm}{Theorem}[section]     
\newtheorem{lmm}{Lemma}[section]	
\newcommand{\R}{\mathbb{R}}
\newcommand{\eps}{\varepsilon}
\newcommand{\fhi}{\varphi}
\newcommand{\vertiii}[1]{{\left\vert\kern-0.25ex\left\vert\kern-0.25ex\left\vert #1
    \right\vert\kern-0.25ex\right\vert\kern-0.25ex\right\vert}}
\def\calA{\mathcal{A}}
\def\calB{\mathcal{B}}
\def\calE{\mathcal{E}}
\def\calG{\mathcal{G}}
\def\calL{\mathcal{L}}
\def\calO{\mathcal{O}}
\def\calH{\mathcal{H}}
\def\calT{\mathcal{T}}
\def\calF{\mathcal{F}}
\def\calR{\mathcal{R}}
\def\calZ{\mathcal{Z}}
\numberwithin{equation}{section}
\pgfplotsset{compat=1.15}
\begin{document}

\title[DG and $C^0$-IP FEM for periodic HJBI problems and numerical effective Hamiltonians]{Discontinuous Galerkin and $C^0$-IP finite element approximation of periodic Hamilton--Jacobi--Bellman--Isaacs problems with application to numerical homogenization}

\author[E. L. Kawecki]{Ellya L. Kawecki}
\address[Ellya L. Kawecki]{University College London, Department of Mathematics, Gower Street, London WC1E 6BT, UK}
\email{e.kawecki@ucl.ac.uk}

\author[T. Sprekeler]{Timo Sprekeler}
\address[Timo Sprekeler]{University of Oxford, Mathematical Institute, Woodstock Road, Oxford OX2 6GG, UK.}
\email{sprekeler@maths.ox.ac.uk}

\subjclass[2010]{35B27, 35J60, 65N12, 65N15, 65N30}
\keywords{Hamilton--Jacobi--Bellman and HJB--Isaacs equations, nondivergence-form elliptic PDE, Cordes condition, nonconforming finite element methods, homogenization}
\date{\today}

\begin{abstract}
In the first part of the paper, we study the discontinuous Galerkin (DG) and $C^0$ interior penalty ($C^0$-IP) finite element approximation of the periodic strong solution to the fully nonlinear second-order Hamilton--Jacobi--Bellman--Isaacs (HJBI) equation with coefficients satisfying the Cordes condition. We prove well-posedness and perform abstract \textit{a posteriori} and \textit{a priori} analyses which apply to a wide family of numerical schemes. These periodic problems arise as the corrector problems in the homogenization of HJBI equations. The second part of the paper focuses on the numerical approximation to the effective Hamiltonian of ergodic HJBI operators via DG/$C^0$-IP finite element approximations to approximate corrector problems. Finally, we provide numerical experiments demonstrating the performance of the numerical schemes.
\end{abstract}

\maketitle

\section{Introduction}

In the first part of this paper we study the periodic boundary value problem for the fully nonlinear second-order Hamilton--Jacobi--Bellman--Isaacs (HJBI) equation  
\begin{align}\label{Intro1}
\left\{\begin{aligned}
\inf_{\alpha\in\calA}\sup_{\beta\in\calB}\left\{ -A^{\alpha\beta}:\nabla^2 u - b^{\alpha\beta}\cdot \nabla u + c^{\alpha\beta} u -f^{\alpha\beta}\right\} = 0 \quad \text{in }Y,\\ u \text{ is $Y$-periodic},
\end{aligned}\right.
\end{align}
where $\calA$ and $\calB$ are compact metric spaces, and $Y:=(0,1)^n\subset\R^n$ denotes the unit cell in dimension $n\geq 2$. Here, we use the notation
\begin{align*}
\fhi^{\alpha\beta}:=\fhi(\,\cdot\,,\alpha,\beta),\qquad \fhi\in\{A,b,c,f\},
\end{align*}
and assume that the functions 
\begin{align*}
A=(a_{ij}):\R^n\times\calA\times\calB\rightarrow \R^{n\times n}_{\mathrm{sym}},\quad b=(b_i):\R^n\times\calA\times\calB\rightarrow \R^{n},\quad c,f:\R^n\times\calA\times\calB\rightarrow \R
\end{align*}
are uniformly continuous and $Y$-periodic in their first argument $y\in\R^n$. Further, we assume that $A$ is uniformly elliptic (see \eqref{unifm ell}), that $\inf_{\R^n\times\calA\times\calB} c>0$, and that the coefficients $A,b,c$ satisfy the Cordes condition 
\begin{align*}
\lvert A\rvert^2+\frac{\lvert b\rvert^2}{2\lambda} + \frac{c^2}{\lambda^2}\leq \frac{1}{n+\delta}\left(\mathrm{tr}( A) + \frac{c}{\lambda}  \right)^{2}   
\end{align*}
in $\R^n\times \calA\times\calB$ for some constants $\lambda>0$ and $\delta\in (0,1)$. These assumptions guarantee the existence and uniqueness of a periodic strong solution $u\in H^2_{\mathrm{per}}(Y)$ to the HJBI problem \eqref{Intro1}; see Section \ref{Sec wellp}.

The goal of the first part of the paper is the construction of discontinuous Galerkin (DG) and $C^0$ interior penalty ($C^0$-IP) finite element schemes for the periodic HJBI problem \eqref{Intro1} and their rigorous \textit{a posteriori} and \textit{a priori} error analysis; see Section \ref{Section FE schemes for periodic HJBI}.

The fully nonlinear HJBI equation is a very general elliptic PDE arising in many contexts, such as stochastic differential games and optimal control problems. In the case that one of the metric spaces $\mathcal{A},\mathcal{B}$ is a singleton set, the HJBI equation becomes the HJB equation arising in stochastic optimal control theory, with applications in finance, engineering, and renewable energies. Interestingly, the HJBI equation is capable of capturing other famous nonlinear PDEs, such as the fully nonlinear Monge--Amp\`{e}re (MA) equation arising in illumination optics, optimal transport (see Kawecki, Lakkis, Pryer \cite{KLP18}), and differential geometry. The MA equation is conditionally elliptic, with classical examples exhibiting a lack of uniqueness. The HJBI formulation of the MA equation is uniquely solvable and has been used in Feng, Jensen \cite{FJ17}, Brenner, Kawecki \cite{BK21} to overcome this lack of uniqueness.

The HJBI problem is well understood in the framework of viscosity solutions (see e.g., Fleming, Soner \cite{FS06}, Crandall, Ishii, Lions \cite{CIL92} and Ishii \cite{Ish89}), and there have been several numerical advances based on methods that enjoy a numerical analogue of the comparison principle used in the theory of viscosity solutions. Such methods include finite difference and semi-Lagrangian schemes such as Feng, Jensen \cite{FJ17}, and also integro-differential finite element methods; see Camilli, Jakobsen \cite{CJ09}, Salgado, Zhang \cite{SZ19}. However, enforcing a discrete maximum principle can be restrictive in practice and can lead to the requirement for large, or even unbounded stencils.

There is not a lot of work on finite element methods for periodic HJB/HJBI problems in the numerical analysis literature, and we refer to Gallistl, Sprekeler, S\"{u}li \cite{GSS21} for a mixed finite element scheme for periodic HJB problems. In recent years, there have been several advances in finite element methods for the Dirichlet problem based on the theory of the concept of strong solutions to HJBI equations. Such methods are typically more flexible than the finite difference method and allow one to capture complex geometries and to obtain higher order convergence rates. The existence and uniqueness of strong solutions to linear nondivergence-form PDEs (arising in the linearization of HJBI problems) and to the HJB equation was established in Smears, S\"{u}li \cite{SS13,SS14,SS16}, along with the well-posedness of optimal $hp$-finite element methods. These methods involved additional stabilizing forms that enforced a numerical analogue of the Miranda--Talenti estimate which is key to the well-posedness of the strong PDE. Other primal finite element methods that tackle the HJB problem are Neilan, Wu \cite{NW19} and Brenner, Kawecki \cite{BK21}. Here the authors use a discrete analogue of the Miranda--Talenti estimate, based on the theory of $H^2(\Omega)\cap H^1_0(\Omega)$ enrichment operators (see Neilan, Wu \cite{NW19}, Brenner, Kawecki \cite{BK21}, Kawecki, Smears \cite{KS21,KSm21}), to prove strong monotonicity of the scheme without the need for an additional stabilizing bilinear form. 

Following on from these approaches, these ideas have been extended from the HJB problem to the HJBI problem in Kawecki, Smears \cite{KS21}, and have been analyzed under a general framework that incorporates \textit{a priori} and \textit{a posteriori} error analysis for a wide family of finite element methods that encompasses the aforementioned schemes \cite{SS13,SS14,NW19,BK21}. In Kawecki, Smears \cite{KSm21}, the convergence of a family of adaptive finite element schemes for HJBI problems was proven. More recently, a virtual element method for the approximation of linear nondivergence-form PDEs and HJBI problems has been proposed and analyzed in Kawecki, Pryer \cite{KP21}.

Alongside this, we refer the reader to the papers \cite{CF95,CJ09,GSS21,GS19,Jen17,JS13} by various authors for finite element approaches allowing the use of $H^1$-conforming finite elements for HJB problems.
 
We refer to Kawecki \cite{Kaw19} for finite element methods for linear nondivergence-form elliptic PDEs on curved domains, and to Gallistl \cite{Gal19}, Kawecki \cite{Kaa19} for those with oblique boundary conditions. For a survey on recent developments of numerical methods for fully nonlinear PDEs see Feng, Glowinski, Neilan \cite{FGN13} and Neilan, Salgado, Zhang \cite{NSZ17}. 

\medskip

Periodic HJBI problems of the form \eqref{Intro1} arise naturally as corrector problems in the periodic homogenization of HJBI equations, which is the focus of the second part of this paper. More precisely, we are interested in the numerical approximation of the effective Hamiltonian corresponding to HJBI operators $F:\R^n\times \R^n\times \R^n\times \R^{n\times n}_{\mathrm{sym}}\rightarrow \R$ of the form
\begin{align*}
F(x,y,p,R):=\inf_{\alpha\in\calA}\sup_{\beta\in\calB}\left\{-A^{\alpha\beta}(x,y):R-b^{\alpha\beta}(x,y)\cdot p-f^{\alpha\beta}(x,y)  \right\}
\end{align*}
with sufficiently regular coefficients which are $Y$-periodic in $y\in\R^n$. 

To any fixed point $(x,p,R)\in \R^n\times \R^n\times \R^{n\times n}_{\mathrm{sym}}$ we associate the \textit{approximate correctors} $\{v^{\sigma}(\cdot\,;x,p,R)\}_{\sigma > 0}\subset C(\R^n)$, defined as the unique viscosity solutions to the \textit{cell $\sigma$-problem} (see Alvarez, Bardi \cite{AB10}) for parameters $\sigma>0$, that is, 
\begin{align*}
\left\{\begin{aligned}
\sigma v^{\sigma}(y;x,p,R)+F(x,y,p,R+\nabla_y^2 v^{\sigma}(y;x,p,R))=0\quad\text{for }y\in Y,\\ y\mapsto v^{\sigma}(y;x,p,R)\text{ is $Y$-periodic}.
\end{aligned}\right.
\end{align*}
The operator $F$ is called \textit{ergodic} (in the $y$-variable) at the point $(x,p,R)$ if there exists a constant $H(x,p,R)$ such that
\begin{align*}
-\sigma v^{\sigma}(\cdot\,;x,p,R) \underset{\sigma\searrow 0}{\longrightarrow} H(x,p,R)\quad\text{uniformly},
\end{align*}
and we say $F$ is ergodic if $F$ is ergodic at every point $(x,p,R)$ and call the function
\begin{align*}
H:\R^n\times \R^n\times \R^{n\times n}_{\mathrm{sym}}\rightarrow \R,\qquad (x,p,R)\mapsto H(x,p,R)
\end{align*}
the effective Hamiltonian corresponding to $F$; see Alvarez, Bardi \cite{AB10}. 

The cell $\sigma$-problem is an approximation to the \textit{true cell problem} familiar to the reader coming from periodic homogenization (see Evans \cite{Eva89,Eva92}), that is, for fixed $(x,p,R)$ there exists at most one constant $\mu\in\R$ such that there exists a viscosity solution $v(\cdot\,;x,p,R)\in C(\R^n)$, a \textit{corrector}, to the problem
\begin{align*}
\left\{\begin{aligned}
F(x,y,p,R+\nabla^2 v(y;x,p,R))=\mu \quad\text{for }y\in Y,\\ y\mapsto v(y;x,p,R)\text{ is $Y$-periodic},
\end{aligned}\right.
\end{align*} 
and when such a $\mu$ exists, $F$ is ergodic at $(x,p,R)$ and we have that $H(x,p,R)=\mu$. However, to a given ergodic operator there may be no corrector in general, and we refer to Alvarez, Bardi \cite{AB01,AB03,AB07,AB10}, Alvarez, Bardi, Marchi \cite{ABM07}, and Arisawa, Lions \cite{AL98} for a detailed overview.

The goal of this second part of the paper is the construction of a numerical scheme for the approximation of the effective Hamiltonian to ergodic HJBI operators which is based on discontinuous Galerkin or $C^0$-IP finite element approximations to the approximate correctors; see Section \ref{Chapter 3}.

The literature on numerical effective Hamiltonians to second-order HJB and HJBI operators is quite sparse. For the numerical homogenization of linear equations in nondivergence-form we refer the reader to Capdeboscq, Sprekeler, S\"{u}li \cite{CSS20} (see also Sprekeler, Tran \cite{ST20}). The numerical homogenization of HJB equations via a mixed finite element approximation of the approximate correctors has been proposed and analyzed in Gallistl, Sprekeler, S\"{u}li \cite{GSS21}. A finite difference approach for numerical effective Hamiltonians to HJB operators can be found in Camilli, Marchi \cite{CM09}, and some exact formulas and numerical simulations for effective Hamiltonians to certain types of HJB operators are available in Finlay, Oberman \cite{FO18,FOO18}. 

It seems that there are no finite element schemes for the numerical approximation of effective Hamiltonians to HJBI operators in the current literature. Let us note that there is significantly more work (see e.g., \cite{ACC08,FR08,GLQ18,GO04,LYZ11,OTV09,Qia03,QTY18}) on numerical effective Hamiltonians to first-order Hamilton--Jacobi and Hamilton--Jacobi--Isaacs equations.

\medskip

This paper is organized as follows: Section \ref{Section FE schemes for periodic HJBI} is focused on the DG and $C^0$-IP finite element approximation to the periodic HJBI problem \eqref{Intro1}. After proving existence and uniqueness of a periodic strong solution in Section \ref{Sec wellp}, we discuss discretization and notation aspects in Section \ref{Sec Discretization}. We perform an \textit{a posteriori} analysis independent of the choice of numerical scheme in Section \ref{Section Apo}, which is based on periodic enrichment and a mixed \textit{a posteriori} bound. In Section \ref{Sec a priori analysis}, we perform an \textit{a priori} error analysis for an abstract numerical scheme under natural assumptions, and present a family of numerical schemes in Section \ref{Sec family of num schemes}.

Section \ref{Chapter 3} is focused on the numerical approximation of the effective Hamiltonian to ergodic HJBI operators. We recall the definition of ergodicity and introduce the effective Hamiltonian in Section \ref{Sec3.1}. Thereafter, in Sections \ref{Sec:Appr of cell sigma} and \ref{Sec3.3}, we present the approximation scheme for the effective Hamiltonian based on DG/$C^0$-IP finite element approximations to the cell $\sigma$-problem.

In Section \ref{Chap4}, we present numerical experiments demonstrating the performance of the numerical scheme for a periodic HJBI problem (Section \ref{Sec4.1}) and the approximation of the effective Hamiltonian to an ergodic HJBI operator (Section \ref{Sec 4.2}).

\section{Discontinuous Galerkin and $C^0$-IP FEM for Periodic HJBI Problems}\label{Section FE schemes for periodic HJBI}

\subsection{Setting}\label{Section 2 setting}
Throughout this work, we work in dimension $n\in\{2,3\}$ and write $Y:=(0,1)^n$ to denote the unit cell in $\R^n$. We are interested in Hamilton--Jacobi--Bellman--Isaacs (HJBI) equations posed in a periodic setting, i.e., problems of the form
\begin{align}\label{HJBI}
\left\{\begin{aligned}
F[u]:=\inf_{\alpha\in\calA}\sup_{\beta\in\calB}\left\{ -A^{\alpha\beta}:\nabla^2 u - b^{\alpha\beta}\cdot \nabla u + c^{\alpha\beta} u -f^{\alpha\beta}\right\} = 0 \quad \text{in }Y,\\ u \text{ is $Y$-periodic},
\end{aligned}\right.
\end{align}
with $\calA$ and $\calB$ denoting compact metric spaces, and uniformly continuous functions 
\begin{align*}
A=(a_{ij}):\R^n\times\calA\times\calB\rightarrow \R^{n\times n}_{\mathrm{sym}},\quad b=(b_i):\R^n\times\calA\times\calB\rightarrow \R^{n},\quad c,f:\R^n\times\calA\times\calB\rightarrow \R
\end{align*}
satisfying the assumptions specified below. Here, we use the notation 
\begin{align*}
\fhi^{\alpha\beta}(y):=\fhi(y,\alpha,\beta),\qquad y\in\R^n,\;(\alpha,\beta)\in\calA\times\calB
\end{align*}
for scalar, vector-valued or matrix-valued functions $\fhi\in C(\R^n\times\calA\times\calB;\calR)$ with $\calR\in\{\R,\R^{n},\R^{n\times n}_{\mathrm{sym}}\}$.

We assume that $A^{\alpha\beta},b^{\alpha\beta},c^{\alpha\beta},f^{\alpha\beta}$ are $Y$-periodic in $y\in\R^n$ and that
\begin{align*}
\inf_{\R^n\times\calA\times\calB} c>0.
\end{align*}
We further require $A$ to be uniformly elliptic, i.e.,   
\begin{align}\label{unifm ell}
\exists\,\zeta_1,\zeta_2>0:\quad \zeta_1 \lvert \xi\rvert^2\leq  A(y,\alpha,\beta)\xi \cdot \xi\leq \zeta_2 \lvert \xi\rvert^2\qquad \forall y,\xi\in \R^n,\, (\alpha,\beta)\in \calA\times\calB,
\end{align}
and that the coefficients satisfy the Cordes condition (see \cite{SS14}), i.e., that there holds
\begin{align}\label{Cordes periodic}
\lvert A\rvert^2+\frac{\lvert b\rvert^2}{2\lambda} + \frac{c^2}{\lambda^2}\leq \frac{1}{n+\delta}\left(\mathrm{tr}( A) + \frac{c}{\lambda}  \right)^{2}   
\end{align}
in $\R^n\times \calA\times\calB$ for some constants $\delta\in (0,1)$ and $\lambda>0$ (note $\lvert M\rvert:=\sqrt{M:M}$ for $M\in\R^{n\times n}$).

\subsection{Well-posedness}\label{Sec wellp}

In this section, we show that the periodic HJBI problem \eqref{HJBI} is well-posed in the sense that there exists a unique periodic strong solution, i.e., a unique function $u\in H^2_{\mathrm{per}}(Y)$ satisfying $F[u]=0$ almost everywhere in $Y$. Recall that the space $H^2_{\mathrm{per}}(Y)\subset H^2(Y)$ is defined as the closure of $C^{\infty}_{\mathrm{per}}(Y):=\{\left.v\right\rvert_Y: v\in C^{\infty}(\R^n) \text{ is $Y$-periodic}\}$ with respect to the $H^2$-norm.   

\subsubsection{The renormalized problem}

Let us introduce the function $\gamma=\gamma(y,\alpha,\beta)\in C(\R^n\times\calA\times\calB)$ defined by
\begin{align}\label{gamma first def}
\gamma:= \left(\lvert A\rvert^2+\frac{\lvert b\rvert^2}{2\lambda} + \frac{c^2}{\lambda^2}  \right)^{-1} \left(\mathrm{tr}( A) + \frac{c}{\lambda}\right)
\end{align}
and note that, by the assumptions on the coefficients $A,b,c$ from Section \ref{Section 2 setting}, we have 
\begin{align}\label{positivity gamma}
\inf_{\R^n\times \calA\times\calB}\gamma >0.
\end{align}
We then consider the renormalized HJBI problem
\begin{align}\label{HJBI renormalized}
\left\{\begin{aligned}
F_{\gamma}[u]:=\inf_{\alpha\in\calA}\sup_{\beta\in\calB}\left\{ \gamma^{\alpha\beta}\left(-A^{\alpha\beta}:\nabla^2 u - b^{\alpha\beta}\cdot \nabla u + c^{\alpha\beta} u -f^{\alpha\beta}\right)\right\} = 0 \quad \text{in }Y,\\ u \text{ is $Y$-periodic}.
\end{aligned}\right.
\end{align}
It is easily checked that the renormalized problem \eqref{HJBI renormalized} is equivalent to the original problem \eqref{HJBI} in the sense that they have the same set of periodic strong solutions. More precisely, we can characterize strong solutions to \eqref{HJBI} as follows:

\begin{rmrk}\label{Rk: Characterization of sol}
For $u\in H^2_{\mathrm{per}}(Y)$, the following assertions are equivalent:
\begin{itemize}
\item[(i)] $F[u]=0$ a.e. in $Y$, i.e., $u$ is a periodic strong solution to the HJBI problem \eqref{HJBI}.
\item[(ii)] $F_{\gamma}[u]=0$ a.e. in $Y$, i.e., $u$ is a periodic strong solution to the renormalized  problem \eqref{HJBI renormalized}.
\item[(iii)] There holds
\begin{align*}
\int_{Y} F_{\gamma}[u] L_{\lambda} v = 0\qquad \forall v\in H^2_{\mathrm{per}}(Y),
\end{align*}
where $L_{\lambda}v:= \lambda v -\Delta v$ for functions $v\in H^2_{\mathrm{per}}(Y)$.
\end{itemize}
\end{rmrk}

Indeed, the equivalence (i)$\Leftrightarrow$(ii) follows from \eqref{positivity gamma} and the compactness of the metric spaces $\calA$ and $\calB$ (see also \cite[Lemma 2.2]{KS21}), and (ii)$\Leftrightarrow$(iii) is a consequence of the surjectivity of the linear differential operator
\begin{align*}
L_{\lambda}:H^2_{\mathrm{per}}(Y)\rightarrow L^2(Y),\qquad L_{\lambda}v:= \lambda v -\Delta v.
\end{align*}

\subsubsection{Consequences of the Cordes condition}

We point out a crucial estimate for the nonlinear operator $F_{\gamma}$. This is a direct consequence of the Cordes condition \eqref{Cordes periodic} and can be found in \cite{KS21}. A short proof is provided for demonstrating how the Cordes condition comes into play.

\begin{lmm}\label{Lmm: consequence of Cordes}
Let $\omega\subset Y$ be an open set. For any $u_1,u_2\in H^2(\omega)$, writing $\delta_u:=u_1-u_2$, we have that
\begin{align}\label{Fgamma prop1}
\left\lvert F_{\gamma}[u_1]-F_{\gamma}[u_2]-L_{\lambda} \delta_u\right\rvert \leq \sqrt{1-\delta}\sqrt{\lvert \nabla^2 \delta_u\rvert^2+2\lambda \lvert\nabla \delta_u\rvert^2 +\lambda^2 \delta_u^2 } 
\end{align}
almost everywhere in $\omega$. 
\end{lmm}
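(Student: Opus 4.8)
The plan is to reduce \eqref{Fgamma prop1} to a pointwise algebraic inequality that is exactly the Cordes condition \eqref{Cordes periodic}. I would first use the elementary fact that for any two real-valued families $(G_{\alpha\beta})$, $(H_{\alpha\beta})$ indexed by $(\alpha,\beta)\in\calA\times\calB$ one has $\lvert\inf_{\alpha}\sup_{\beta}G_{\alpha\beta}-\inf_{\alpha}\sup_{\beta}H_{\alpha\beta}\rvert\le\sup_{\alpha,\beta}\lvert G_{\alpha\beta}-H_{\alpha\beta}\rvert$. Applying this a.e.\ in $\omega$ with $G_{\alpha\beta}=\gamma^{\alpha\beta}(-A^{\alpha\beta}:\nabla^2 u_1-b^{\alpha\beta}\cdot\nabla u_1+c^{\alpha\beta}u_1-f^{\alpha\beta})$ and $H_{\alpha\beta}$ the same expression with $u_1$ replaced by $u_2$ and the $(\alpha,\beta)$-independent term $L_{\lambda}\delta_u$ added — so that $\inf_{\alpha}\sup_{\beta}H_{\alpha\beta}=F_{\gamma}[u_2]+L_{\lambda}\delta_u$ and the $f^{\alpha\beta}$-contributions cancel in $G_{\alpha\beta}-H_{\alpha\beta}$ — the left-hand side of \eqref{Fgamma prop1} is bounded a.e.\ by
\[
\sup_{(\alpha,\beta)\in\calA\times\calB}\bigl\lvert \gamma^{\alpha\beta}\bigl(-A^{\alpha\beta}:\nabla^2\delta_u-b^{\alpha\beta}\cdot\nabla\delta_u+c^{\alpha\beta}\delta_u\bigr)-\lambda\delta_u+\Delta\delta_u\bigr\rvert.
\]

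I would then estimate each term in this supremum by Cauchy--Schwarz. Setting $\Theta:=(\nabla^2\delta_u,\sqrt{2\lambda}\,\nabla\delta_u,\lambda\delta_u)\in\R^{n\times n}_{\mathrm{sym}}\times\R^n\times\R$ with the natural Euclidean inner product (so that $\lvert\Theta\rvert^2=\lvert\nabla^2\delta_u\rvert^2+2\lambda\lvert\nabla\delta_u\rvert^2+\lambda^2\delta_u^2$), the quantity inside the modulus equals $w^{\alpha\beta}\cdot\Theta$ with
\[
w^{\alpha\beta}:=\Bigl(I-\gamma^{\alpha\beta}A^{\alpha\beta},\ -\tfrac{\gamma^{\alpha\beta}b^{\alpha\beta}}{\sqrt{2\lambda}},\ \tfrac{\gamma^{\alpha\beta}c^{\alpha\beta}}{\lambda}-1\Bigr),
\]
using $I:\nabla^2\delta_u=\Delta\delta_u$. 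Hence the supremum above is at most $\sup_{\alpha,\beta}\lvert w^{\alpha\beta}\rvert\cdot\lvert\Theta\rvert$, and it remains to prove the pointwise bound $\lvert w^{\alpha\beta}\rvert^2\le 1-\delta$ on $\R^n\times\calA\times\calB$.

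Finally, this is a short computation: expanding $\lvert w^{\alpha\beta}\rvert^2$ and using $\lvert I\rvert^2=n$ and $I:A^{\alpha\beta}=\mathrm{tr}(A^{\alpha\beta})$ gives
\[
\lvert w^{\alpha\beta}\rvert^2=(n+1)-2\gamma^{\alpha\beta}\Bigl(\mathrm{tr}(A^{\alpha\beta})+\tfrac{c^{\alpha\beta}}{\lambda}\Bigr)+(\gamma^{\alpha\beta})^2\Bigl(\lvert A^{\alpha\beta}\rvert^2+\tfrac{\lvert b^{\alpha\beta}\rvert^2}{2\lambda}+\tfrac{(c^{\alpha\beta})^2}{\lambda^2}\Bigr).
\]
Writing $P:=\lvert A\rvert^2+\lvert b\rvert^2/(2\lambda)+c^2/\lambda^2$ and $Q:=\mathrm{tr}(A)+c/\lambda$, the definition \eqref{gamma first def} reads $\gamma=Q/P$, so $-2\gamma Q+\gamma^2 P=-Q^2/P$ and thus $\lvert w^{\alpha\beta}\rvert^2=(n+1)-Q^2/P$. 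Since \eqref{Cordes periodic} is precisely $P\le Q^2/(n+\delta)$, i.e.\ $Q^2/P\ge n+\delta$, we obtain $\lvert w^{\alpha\beta}\rvert^2\le(n+1)-(n+\delta)=1-\delta$, which completes the proof. There is no genuine obstacle here; the only points requiring care are the a.e./measurability bookkeeping in the inf--sup reduction and carrying out the last display correctly — and it is exactly in that display that the specific normalization $\gamma$ of \eqref{gamma first def} enters, which is why the renormalized operator $F_{\gamma}$ (rather than $F$ itself) appears in the statement.
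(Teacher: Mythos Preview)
Your proof is correct and follows essentially the same route as the paper's: the inf--sup reduction, then Cauchy--Schwarz against the vector $(\nabla^2\delta_u,\sqrt{2\lambda}\,\nabla\delta_u,\lambda\delta_u)$, then the identity $\lvert w^{\alpha\beta}\rvert^2=(n+1)-Q^2/P$ via the definition of $\gamma$, and finally the Cordes inequality. Your explicit introduction of $\Theta$ and $w^{\alpha\beta}$ is a slightly more structured presentation of the same Cauchy--Schwarz step the paper carries out inline.
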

\begin{proof}
Let $u_1,u_2\in H^2(\omega)$ and set $\delta_u:=u_1-u_2$. Note that for any bounded sets $\{x^{\alpha\beta}\}_{(\alpha,\beta)\in\calA\times\calB}\subset\R$ and $\{y^{\alpha\beta}\}_{(\alpha,\beta)\in\calA\times\calB}\subset\R$ we have that 
\begin{align*}
\left\lvert\inf_{\alpha\in \calA}\sup_{\beta\in\calB} x^{\alpha\beta}-\inf_{\alpha\in\calA}\sup_{\beta\in\calB} y^{\alpha\beta}\right\rvert \leq \sup_{(\alpha,\beta)\in\calA\times \calB}\lvert x^{\alpha\beta}-y^{\alpha\beta}\rvert.
\end{align*}
This yields
\begin{align*}
\lvert F_{\gamma}[u_1]-&F_{\gamma}[u_2]-L_{\lambda} \delta_u\rvert^2  \leq \sup_{(\alpha,\beta)\in\calA\times\calB} \left\lvert \gamma^{\alpha\beta}\left(-A^{\alpha\beta}:\nabla^2 \delta_u-b^{\alpha\beta}\cdot \nabla \delta_u+c^{\alpha\beta}\delta_u\right)+\Delta \delta_u-\lambda \delta_u\right\rvert^2\\ &\leq \sup_{(\alpha,\beta)\in\calA\times\calB} \left\{ \left\lvert -\gamma^{\alpha\beta}A^{\alpha\beta}+I\right\rvert^2+\frac{\lvert \gamma^{\alpha\beta}b^{\alpha\beta}\rvert^2}{2\lambda}+\frac{\lvert \gamma^{\alpha\beta}c^{\alpha\beta}-\lambda\rvert^2 }{\lambda^2}\right\}\left(\lvert \nabla^2\delta_u\rvert^2 + 2\lambda\left\lvert \nabla \delta_u\right\rvert^2 +\lambda^2 \delta_u^2   \right)\\ &= \sup_{(\alpha,\beta)\in\calA\times\calB} \left\{ n+1-\frac{\left(\mathrm{tr}(A^{\alpha\beta})+\frac{c^{\alpha\beta}}{\lambda}\right)^2}{\lvert A^{\alpha\beta}\rvert^2+\frac{\lvert b^{\alpha\beta}\rvert^2}{2\lambda}+\frac{\lvert c^{\alpha\beta}\rvert^2}{\lambda^2}}\right\}\left(\lvert \nabla^2\delta_u\rvert^2 + 2\lambda\left\lvert \nabla \delta_u\right\rvert^2 +\lambda^2 \delta_u^2   \right)\\
&\leq (1-\delta)\left(\lvert \nabla^2\delta_u\rvert^2 + 2\lambda\left\lvert \nabla \delta_u\right\rvert^2 +\lambda^2 \delta_u^2   \right)
\end{align*}
almost everywhere in $\omega$, where we have used the Cauchy--Schwarz inequality, simple calculation and the Cordes condition \eqref{Cordes periodic}.
\end{proof}

Observe that by the triangle and Cauchy-Schwarz inequalities, we can eliminate the term $L_{\lambda}\delta_u$ from the left-hand side of \eqref{Fgamma prop1}. We thus find that, in the situation of Lemma \ref{Lmm: consequence of Cordes}, we have the Lipschitz-type estimate 
\begin{align}\label{Fgamma prop}
\left\lvert F_{\gamma}[u_1]-F_{\gamma}[u_2]\right\rvert \leq \left(\sqrt{1-\delta}+\sqrt{n+1}\right)\sqrt{\lvert \nabla^2 \delta_u\rvert^2+2\lambda \lvert\nabla \delta_u\rvert^2 +\lambda^2 \delta_u^2 }
\end{align} 
almost everywhere in $\omega$.

\subsubsection{Existence and uniqueness of solutions}

We are now in a position to prove the existence and uniqueness of periodic strong solutions to the HJBI problem \eqref{HJBI}. In view of Remark \ref{Rk: Characterization of sol}, let us define
\begin{align*}
B:H^2_{\mathrm{per}}(Y)\times H^2_{\mathrm{per}}(Y)\rightarrow\R,\qquad B(u,v):= \int_{Y} F_{\gamma}[u] L_{\lambda} v.
\end{align*}

We can now proceed as in \cite{KS21} in showing that the Browder--Minty theorem applies and we obtain the following theorem:

\begin{thrm}[Well-posedness]\label{thm well-pos}
In the situation of Section \ref{Section 2 setting}, there exists a unique periodic strong solution $u\in H^2_{\mathrm{per}}(Y)$ to the HJBI problem \eqref{HJBI}. 
\end{thrm}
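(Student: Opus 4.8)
The plan is to recast \eqref{HJBI} as an operator equation on the Hilbert space $V:=H^2_{\mathrm{per}}(Y)$ endowed with the inner product $(u,v)_V:=\int_Y L_\lambda u\,L_\lambda v$, and then apply the Browder--Minty theorem (in the form valid for strongly monotone, Lipschitz operators, which also yields uniqueness). First I would record the periodic Miranda--Talenti-type identity: integrating by parts twice and using $Y$-periodicity so that all boundary terms vanish, every $v\in H^2_{\mathrm{per}}(Y)$ satisfies
\[
\|L_\lambda v\|_{L^2(Y)}^2=\int_Y\left(|\nabla^2 v|^2+2\lambda|\nabla v|^2+\lambda^2 v^2\right),
\]
so $\|\cdot\|_V:=\|L_\lambda\,\cdot\,\|_{L^2(Y)}$ is a norm on $V$ equivalent to the standard $H^2$-norm (here $\lambda>0$ is used), and $(V,(\cdot,\cdot)_V)$ is a Hilbert space. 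Since the coefficients are bounded, $F_\gamma[u]\in L^2(Y)$ for every $u\in V$, so $B(u,\cdot)$ is a bounded linear functional on $V$, and the Riesz representation theorem provides a well-defined map $T\colon V\to V$ with $(Tu,v)_V=B(u,v)$ for all $u,v\in V$. By Remark \ref{Rk: Characterization of sol}, a function $u\in V$ is a periodic strong solution of \eqref{HJBI} if and only if $Tu=0$.

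Next I would check that $T$ is Lipschitz. For $u_1,u_2\in V$ with $\delta_u:=u_1-u_2$, Cauchy--Schwarz and the Lipschitz-type bound \eqref{Fgamma prop} give
\[
|(Tu_1-Tu_2,v)_V|=\left|\int_Y\left(F_\gamma[u_1]-F_\gamma[u_2]\right)L_\lambda v\right|\le\|F_\gamma[u_1]-F_\gamma[u_2]\|_{L^2(Y)}\,\|v\|_V\le\left(\sqrt{1-\delta}+\sqrt{n+1}\right)\|\delta_u\|_V\,\|v\|_V,
\]
where the periodic identity above identifies the $L^2$-norm of $\bigl(|\nabla^2\delta_u|^2+2\lambda|\nabla\delta_u|^2+\lambda^2\delta_u^2\bigr)^{1/2}$ with $\|\delta_u\|_V$. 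Hence $\|Tu_1-Tu_2\|_V\le(\sqrt{1-\delta}+\sqrt{n+1})\|\delta_u\|_V$.

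The main point is strong monotonicity, which is where Lemma \ref{Lmm: consequence of Cordes} enters. Writing
\[
(Tu_1-Tu_2,\delta_u)_V=\|\delta_u\|_V^2+\int_Y\left(F_\gamma[u_1]-F_\gamma[u_2]-L_\lambda\delta_u\right)L_\lambda\delta_u,
\]
I would bound the last integral by Cauchy--Schwarz, then Lemma \ref{Lmm: consequence of Cordes}, then the periodic identity once more:
\[
\left|\int_Y\left(F_\gamma[u_1]-F_\gamma[u_2]-L_\lambda\delta_u\right)L_\lambda\delta_u\right|\le\sqrt{1-\delta}\int_Y\sqrt{|\nabla^2\delta_u|^2+2\lambda|\nabla\delta_u|^2+\lambda^2\delta_u^2}\,|L_\lambda\delta_u|\le\sqrt{1-\delta}\,\|\delta_u\|_V^2,
\]
which yields $(Tu_1-Tu_2,\delta_u)_V\ge(1-\sqrt{1-\delta})\|\delta_u\|_V^2$ with $1-\sqrt{1-\delta}>0$ because $\delta\in(0,1)$.

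With $T$ Lipschitz and strongly monotone, the Browder--Minty theorem gives that $T\colon V\to V$ is a bijection; in particular $Tu=0$ has a unique solution $u\in H^2_{\mathrm{per}}(Y)$, which by Remark \ref{Rk: Characterization of sol} is the unique periodic strong solution of \eqref{HJBI}. The only genuinely delicate step is the monotonicity chain: it depends on the sharp interplay between $\|L_\lambda\delta_u\|_{L^2(Y)}^2$ and the right-hand side of Lemma \ref{Lmm: consequence of Cordes} after Cauchy--Schwarz, which is precisely why one must use the \emph{equality} in the periodic integration-by-parts identity rather than merely the Miranda--Talenti inequality; everything else (well-definedness of $T$, coercivity if one prefers the classical Browder--Minty statement, obtained by setting $u_2=0$) is routine.
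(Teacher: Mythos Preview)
Your argument is correct and follows essentially the same route as the paper: both rely on the periodic integration-by-parts identity $\|L_\lambda v\|_{L^2(Y)}^2=\|\nabla^2 v\|_{L^2(Y)}^2+2\lambda\|\nabla v\|_{L^2(Y)}^2+\lambda^2\|v\|_{L^2(Y)}^2$, derive Lipschitz continuity from \eqref{Fgamma prop}, obtain strong monotonicity via the same decomposition and Lemma~\ref{Lmm: consequence of Cordes}, and conclude by Browder--Minty. The only cosmetic difference is that you package the argument through the Riesz map $T$ on $(V,(\cdot,\cdot)_V)$ while the paper works directly with the form $B$.
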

\begin{proof}
Note that it is enough to show that $B$ satisfies the Lipschitz property
\begin{align}\label{B Lipschitz}
\lvert B(u_1,v)-B(u_2,v)\rvert\lesssim \|u_1-u_2\|_{H^2(Y)}\|v\|_{H^2(Y)}\qquad \forall u_1,u_2,v\in H^2_{\mathrm{per}}(Y),
\end{align}
and strong monotonicity, i.e.,
\begin{align}\label{B monotone}
\|u_1-u_2\|_{H^2(Y)}^2\lesssim B(u_1,u_1-u_2)-B(u_2,u_1-u_2)\qquad \forall u_1,u_2\in H^2_{\mathrm{per}}(Y). 
\end{align}
The Browder--Minty theorem then yields that there exists a unique $u\in H^2_{\mathrm{per}}(Y)$ such that 
\begin{align*}
B(u,v)=0\qquad \forall v\in H^2_{\mathrm{per}}(Y),
\end{align*}
which proves the theorem in view of Remark \ref{Rk: Characterization of sol}. 

Before we show \eqref{B Lipschitz} and \eqref{B monotone}, let us note that integration by parts and a density argument yields $\|\Delta v\|_{L^2(Y)}=\|\nabla^2 v\|_{L^2(Y)}$ for any $v\in H^2_{\mathrm{per}}(Y)$, and hence, using integration by parts again, we have
\begin{align}\label{Llambda norm}
\|L_{\lambda}v\|_{L^2(Y)}^2=\|\nabla^2 v\|_{L^2(Y)}^2 + 2\lambda \|\nabla v\|_{L^2(Y)}^2 +\lambda^2 \| v\|_{L^2(Y)}^2 \geq C_\lambda \|v\|_{H^2(Y)}^2\qquad \forall v\in H^2_{\mathrm{per}}(Y).
\end{align}

The Lipschitz property \eqref{B Lipschitz} now immediately follows from \eqref{Fgamma prop} and it remains to show strong monotonicity. To this end, let $u_1,u_2\in H^2_{\mathrm{per}}(Y)$ and write $\delta_u:=u_1-u_2$. Using Lemma \ref{Lmm: consequence of Cordes}, we find
\begin{align*}
B(u_1,\delta_u)-B(u_2,\delta_u)=\|L_{\lambda}\delta_u\|_{L^2(Y)}^2+\int_Y \left(F_{\gamma}[u_1]-F_{\gamma}[u_2]-L_{\lambda}\delta_u\right)L_{\lambda}\delta_u\geq (1-\sqrt{1-\delta})\|L_{\lambda}\delta_u\|_{L^2(Y)}^2
\end{align*}
and hence, by \eqref{Llambda norm}, there holds \eqref{B monotone} and the claim is proved.
\end{proof}

\begin{rmrk}\label{Rk: bound on soln}
For the unique periodic strong solution $u\in H^2_{\mathrm{per}}(Y)$ to the HJBI problem \eqref{HJBI}, we have the bound
\begin{align*}
\|L_{\lambda}u\|_{L^2(Y)}=\sqrt{\|\nabla^2 u\|_{L^2(Y)}^2 + 2\lambda \|\nabla u\|_{L^2(Y)}^2 +\lambda^2 \| u\|_{L^2(Y)}^2} \leq  \frac{\|F_{\gamma}[0]\|_{L^2(Y)}}{1-\sqrt{1-\delta}}.
\end{align*}
\end{rmrk}
\begin{proof}
Note that we have already obtained the first equality (see \eqref{Llambda norm}). We use Lemma \ref{Lmm: consequence of Cordes} and the solution property $F_{\gamma}[u]=0$ to find
\begin{align*}
(1-\sqrt{1-\delta})\|L_{\lambda}u\|_{L^2(Y)}^2 \leq \int_Y (F_{\gamma}[u]-F_{\gamma}[0])L_{\lambda}u = -\int_Y F_{\gamma}[0] L_{\lambda}u.
\end{align*}
We conclude the proof by using H\"{o}lder's inequality to obtain
\begin{align*}
\|L_{\lambda}u\|_{L^2(Y)}^2 \leq \frac{1}{1-\sqrt{1-\delta}}\left\lvert\int_Y F_{\gamma}[0] L_{\lambda}u \right\rvert\leq \frac{\|F_{\gamma}[0]\|_{L^2(Y)}}{1-\sqrt{1-\delta}}\|L_{\lambda}u\|_{L^2(Y)},
\end{align*}
which yields the desired bound.
\end{proof}

\subsection{Discretization}\label{Sec Discretization}

This section is devoted to discretization aspects. We introduce DG and $C^0$-IP finite element spaces $V^0_{\calT}$ and $V^1_{\calT}$ for an appropriate partition $\calT$ of the computational domain, and define jump and average operators.

\subsubsection{The partition $\calT$}

We consider a finite conforming partition $\calT$ of the closed unit cell $\bar{Y}$ consisting of closed simplices that can be periodically extended in a $Y$-periodic fashion to $\R^n$, i.e., we require the discretization to be consistent with the identification of opposite faces by periodicity. We introduce the following mathematical objects associated with the partition $\calT$:

\begin{itemize}
\item[(i)] Set of faces $\calF$ and associated unit normal $n_F$: \\We let $\calF:= \calF^{\mathrm{I}}\cup \calF^{\mathrm{BP}}$ denote the set of $(n-1)$-dimensional faces, where $\calF^{\mathrm{I}}$ is the set of all interior faces of $\calT$, and $\calF^{\mathrm{BP}}$ the set of all boundary face-pairs of $\calT$, i.e., the boundary faces upon a periodic identification of opposite faces. For each face $F\in\calF$, we associate a fixed choice of unit normal $n_F$, where we often only write $n$ for simplicity; see Figure \ref{fig: T}. 
\item[(ii)] Shape-regularity parameter $\theta_{\calT}$ and mesh-size function $h_{\calT}$: \\We let $\theta_T:=\max\{\rho_K^{-1} \mathrm{diam}(K):K\in\calT\}$ with $\rho_K$ being the diameter of the largest ball that can be inscribed in the element $K\in\calT$. We further introduce $h_{\calT}:\bar{Y}\rightarrow\R$ defined via $\left.h_{\calT}\right\rvert_{\mathrm{int}(K)}:=h_K:=\left(\calL^n(K)\right)^{\frac{1}{n}}$ for all $K\in\calT$ and  $\left.h_{\calT}\right\rvert_{F}:=h_F:=\left(\calH^{n-1}(F)\right)^{\frac{1}{n-1}}$ for all $F\in\calF$. 
\end{itemize} 

Let us note that the concept of boundary face-pairs was introduced in \cite{Vem07} in the context of discontinuous Galerkin methods for linear elliptic periodic boundary value problems. 

\subsubsection{Finite element spaces $V_{\calT}^s$}

For fixed $\bar{p}\geq 2$, we define the discontinuous Galerkin finite element space $V_{\calT}^0$ and the $C^0$-IP finite element space $V_{\calT}^1$ by
\begin{align*}
V_{\calT}^0:=\left\{v_{\calT}\in L^2(Y):\left.v_{\calT}\right\rvert_K \in \mathbb{P}_{\bar{p}}\;\forall K\in\calT \right\}\quad\text{and}\quad V_{\calT}^1:=V_{\calT}^0\cap H^1_{\mathrm{per}}(Y),
\end{align*}
where $\mathbb{P}_{\bar{p}}$ denotes the space of polynomials of degree at most $\bar{p}$. 

Let us make some comments about the derivatives of functions in the finite element spaces. For a function $v\in V^0_{\calT}$, we define $\nabla v\in L^1(Y;\R^n)$ to be the piecewise gradient and $\nabla^2 v\in L^1(Y;\R^{n\times n})$ to be the piecewise Hessian over the elements of the partition. We then define $\Delta v:=\mathrm{tr}(\nabla^2v)\in L^1(Y)$. 

We equip the finite element spaces $V_{\calT}^s$, $s\in\{0,1\}$, with the norm
\begin{align*}
\|v_{\calT}\|_{\calT,\lambda}^2:= \int_Y \left(\lvert \nabla^2 v_{\calT}\rvert^2 +2\lambda\lvert \nabla v_{\calT}\rvert^2+ \lambda^2 v_{\calT}^2\right) + \lvert v_{\calT}\rvert_{J,\calT}^2,\quad \lvert v_{\calT}\rvert_{J,\calT}^2:= \int_{\calF} \left( h_{\calT}^{-1} \lvert \llbracket\nabla v_{\calT}\rrbracket\rvert^2 + h_{\calT}^{-3}\lvert \llbracket v_{\calT}\rrbracket\rvert^2\right)
\end{align*}
for functions $v_{\calT}\in V^s_{\calT}$. In order to simplify the presentation, throughout this work we write $\int_{\calE}:=\sum_{K\in \calE}\int_K$ for collections $\calE\subset \calT$ of elements and $\int_{\calG}:=\sum_{F\in \calG}\int_F$ for collections $\calG\subset\calF$ of faces. The jump operator $\llbracket\cdot\rrbracket$ is defined in the following paragraph.

\subsubsection{Jump and average operators}

For elements $K\in\calT$, we write $\tau_{\partial K}:\mathrm{BV}(K)\rightarrow L^1(\partial K)$ to denote the trace operator. Further, for $v\in \mathrm{BV}(Y)$ we define $\tau_{\partial K}v:=\tau_{\partial K} (\left.v\right\rvert_K)$ for elements $K\in\calT$. We then introduce the jump $\llbracket v\rrbracket_F$ and the average $\{v\}_F$ of a function $v\in \mathrm{BV}(Y)$ over a face $F=\partial K\cap \partial K'\in\calF$ shared by the elements $K,K'\in\calT$ by
\begin{align*}
\llbracket v\rrbracket_F&:=\left.\tau_{\partial K} v\right\rvert_F - \left.\tau_{\partial K'} v\right\rvert_F \in L^1(F),\\ \{v\}_F&:=\frac{\left.\tau_{\partial K} v\right\rvert_F + \left.\tau_{\partial K'} v\right\rvert_F}{2}\in L^1(F),
\end{align*} 
where $K,K'$ are labeled such that the unit normal $n_F$ is the outward normal to $K$ on the face $F$; see Figure \ref{fig: T}.  
To simplify the presentation, we will often simply write  $\llbracket \cdot\rrbracket$ and $\{ \cdot\}$, and drop the subscript.

\begin{figure}
\centering
\includegraphics[scale=0.66]{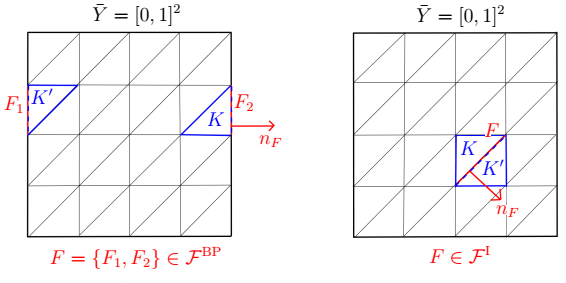}
\caption{Illustration of a boundary face-pair $F\in \calF^{\mathrm{BP}}$ (left) and an interior face $F\in \calF^{\mathrm{I}}$ (right) in dimension $n=2$.}
\label{fig: T}
\end{figure}

\subsection{\textit{A posteriori} analysis}\label{Section Apo}

Let $u\in H^2_{\mathrm{per}}(Y)$ denote the unique solution to the HJBI problem \eqref{HJBI} and let $v_{\calT}\in V^0_{\calT}$ be arbitrary. The goal of this section is to estimate the $\|\cdot\|_{\calT,\lambda}$-distance between $u$ and $v_{\calT}$, i.e., 
\begin{align*}
\|u-v_{\calT}\|_{\calT,\lambda}^2=\int_Y \left(\lvert \nabla^2 (u-v_{\calT})\rvert^2 +2\lambda\lvert \nabla (u-v_{\calT})\rvert^2+ \lambda^2 (u-v_{\calT})^2\right) + \lvert u-v_{\calT}\rvert_{J,\calT}^2,
\end{align*}
in terms of a computable quantity not depending on the solution $u$. We start by introducing periodic enrichment operators which are an important tool in establishing the \textit{a posteriori} bound.

\subsubsection{Periodic enrichment}

We let $\calZ$ be the set of points in $\bar{Y}$ corresponding to the Lagrange degrees of freedom for the function space $V^1_{\calT}=V^0_{\calT}\cap H^1_{\mathrm{per}}(Y)$, where boundary nodes on $\partial Y$ are identified with all their $Y$-periodic counterparts. For $z\in\calZ$, we then define the \textit{periodic neighborhood} $N(z)\subset \calT$ to be the set of all elements $K\in \calT$ that contain $z$ or any periodically identical point to $z$; see Figure \ref{fig: N(z)}.

\begin{figure}
\centering
\includegraphics[scale=0.6]{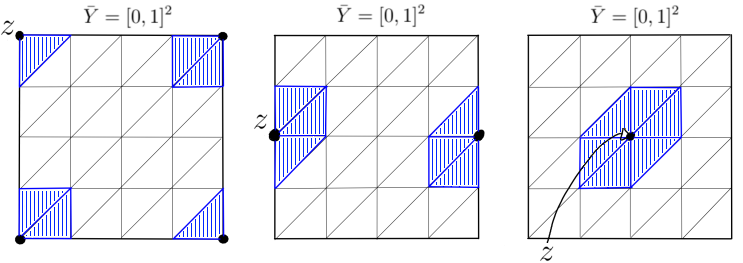}
\caption{Illustration of the periodic neighborhood $N(z)\subset \calT$ in dimension $n=2$. \textit{Left}: $z\in\calZ\cap \partial Y$ corner point, \textit{middle}: $z\in\calZ\cap \partial Y$ non-corner boundary point, \textit{right}: $z\in\calZ\cap Y$ interior point.}
\label{fig: N(z)}
\end{figure}

Let us introduce an operator
\begin{align*}
E_1:V^0_{\calT}\rightarrow V^0_{\calT}\cap H^1_{\mathrm{per}}(Y),
\end{align*}
which we call the $H^1_{\mathrm{per}}$-enrichment operator, defined through averaging of the function values in periodic neighborhoods of points in $\calZ$. That is, for $v_{\calT}\in V^0_{\calT}$, we define the function $E_1 v_{\calT}\in V^1_{\calT}$ by prescribing
\begin{align*}
E_1v_{\calT}(z):=\frac{1}{\lvert N(z)\rvert} \sum_{K\in N(z)}\left. v_{\calT}\right\rvert_{K}(z)
\end{align*}
at points $z\in\calZ$. Denoting the collection of interior faces and boundary face-pairs neighboring an element $K\in\calT$ by $\calF_K:=\{F\in\calF:F\cap K\neq \emptyset\}$, we then have the bound
\begin{align}\label{E1 bound}
\sum_{m=0}^2\int_K h_{\calT}^{2m-4}  \left\lvert \nabla^m(v_{\calT}-E_1 v_{\calT})\right\rvert^2\lesssim \int_{\calF_K}h_{\calT}^{-3} \lvert\llbracket v_{\calT}\rrbracket\rvert^2\qquad \forall K\in\calT
\end{align}
for all $v_{\calT}\in V^0_{\calT}$, where the constant absorbed in $\lesssim$ only depends on $n,\theta_{\calT}$ and $\bar{p}$. This bound follows from the arguments in \cite{KP03}.

Let us also discuss the periodic enrichment of vector fields. To this end, we define the space containing potential gradients of functions in the finite element spaces by
\begin{align*}
W_{\calT}:=\{v_{\calT}\in L^2(Y;\R^n):\left. v_{\calT}\right\rvert_K\in\mathbb{P}_{\bar{p}-1}^n\;\forall K\in\calT\}.
\end{align*}
Indeed, observe that $\nabla v_{\calT}\in W_{\calT}$ for any $v_{\calT}\in V^s_{\calT}$, $s\in\{0,1\}$. Analogously to $E_1$, we can then construct a linear operator 
\begin{align*}
E_1^g:W_{\calT}\rightarrow W_{\calT}\cap H^1_{\mathrm{per}}(Y;\R^n)
\end{align*}
satisfying
\begin{align}\label{ET bound}
\int_K \left( \left\lvert \nabla(w_{\calT}-E_1^g w_{\calT})\right\rvert^2+ h_{\calT}^{-2}\left\lvert w_{\calT}-E_1^g w_{\calT}\right\rvert^2\right)\lesssim \int_{\calF_K}h_{\calT}^{-1} \lvert\llbracket w_{\calT}\rrbracket\rvert^2\qquad \forall K\in\calT
\end{align}
for all $w_{\calT}\in W_{\calT}$, where the constant absorbed in $\lesssim$ only depends on $n,\theta_{\calT}$ and $\bar{p}$. With the enrichment operators at hand we can proceed with the \textit{a posteriori} analysis, independent of the choice of the numerical scheme. 

\subsubsection{The \textit{a posteriori} bound}\label{Sec: A posteriori} 

It will be useful to introduce some notation from the mixed finite element theory developed in \cite{GSS21}. Let us consider the function space
\begin{align*}
X:=W_{\mathrm{per}}(Y;\R^n)\times H^1_{\mathrm{per}}(Y),
\end{align*}
which we equip with the $\vertiii{\cdot}_{\lambda}$-norm given by
\begin{align*}
\vertiii{(w',u')}_{\lambda}^2:= \|\nabla w'\|_{L^2(Y)}^2 + 2\lambda\|\nabla u'\|_{L^2(Y)}^2 +\lambda^2\|u'\|_{L^2(Y)}^2,\qquad (w',u')\in X.
\end{align*}
We recall that the spaces $W_{\mathrm{per}}(Y)\subset H^1_{\mathrm{per}}(Y)$ and $W_{\mathrm{per}}(Y;\R^n)\subset H^1_{\mathrm{per}}(Y;\R^n)$ are defined as
\begin{align*}
W_{\mathrm{per}}(Y):=\left\{ v\in H^1_{\mathrm{per}}(Y):\int_Y v = 0\right\},\qquad W_{\mathrm{per}}(Y;\R^n):= \left(W_{\mathrm{per}}(Y)\right)^n.
\end{align*}
We further define the mixed analogue $F_{\gamma}^{M}$ to the nonlinear operator $F_{\gamma}$ by
\begin{align*}
F_{\gamma}^{M}[(w',u')]:=\inf_{\alpha\in\calA}\sup_{\beta\in\calB}\left\{ \gamma^{\alpha\beta}\left(-A^{\alpha\beta}:\nabla w' - b^{\alpha\beta}\cdot \nabla u' + c^{\alpha\beta} u' -f^{\alpha\beta}\right)\right\}
\end{align*}
for pairs $(w',u')\in X$, and observe that the solution $u\in H^2_{\mathrm{per}}(Y)$ to \eqref{HJBI} satisfies
\begin{align}\label{Fgamma mixed equ}
F_{\gamma}^M[(\nabla u,u)]=F_{\gamma}[u]=0\quad\text{a.e. in }Y.
\end{align}

We can use the arguments from \cite{GSS21} to prove an \textit{a posteriori} bound on the $\vertiii{\cdot}_{\lambda}$-distance between the solution pair $(\nabla u,u)$ and an arbitrary pair $(w',u')\in X$.
\begin{lmm}\label{Lemma mixed}
Let $u\in H^2_{\mathrm{per}}(Y)$ denote the unique solution to the HJBI problem \eqref{HJBI}. Then we have
\begin{align*}
\vertiii{(\nabla u-w',u-u')}_{\lambda}^2 \lesssim \|F_{\gamma}^M[(w',u')]\|_{L^2(Y)}^2+ \|\mathrm{rot}(w')\|_{L^2(Y)}^2 +\|\nabla u' -w'\|_{L^2(Y)}^2\quad \forall\, (w',u')\in X
\end{align*}
with the constant absorbed in $\lesssim$ only depending on the Cordes parameters $\delta,\lambda$.
\end{lmm}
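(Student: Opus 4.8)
The plan is to mimic the strong-monotonicity argument that drove Theorem \ref{thm well-pos}, but now in the mixed space $X$ where the Hessian has been replaced by an independent vector field $w'$ whose failure to be an exact gradient is measured by $\mathrm{rot}(w')$ and $\nabla u' - w'$. First I would observe that the operator $L_\lambda$ has a natural mixed analogue: for a pair $(w',u')\in X$ one wants the linear map $(w',u')\mapsto \div w' - \lambda u'$ (or $-\mathrm{div}\,w'+\lambda u'$) composed against a suitable test object. Concretely, since $u\in H^2_{\mathrm{per}}(Y)$ solves \eqref{HJBI}, by \eqref{Fgamma mixed equ} we have $F_\gamma^M[(\nabla u,u)]=0$, so for any $(w',u')\in X$,
\begin{align*}
\int_Y \bigl(F_\gamma^M[(w',u')] - L_\lambda^M[(w'-\nabla u,u'-u)]\bigr)\,\Phi = \int_Y F_\gamma^M[(w',u')]\,\Phi - \int_Y L_\lambda^M[(w'-\nabla u,u'-u)]\,\Phi
\end{align*}
where $L_\lambda^M[(z',z)]:=-\mathrm{div}\,z' + \lambda z$ and $\Phi$ is a test function to be chosen. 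The key pointwise estimate from Lemma \ref{Lmm: consequence of Cordes} carries over verbatim at the level of the mixed operator, because the Cordes computation there never used that $\nabla^2\delta_u$ was a genuine Hessian — only the algebraic identity $|-\gamma A + I|^2 + |\gamma b|^2/(2\lambda) + |\gamma c - \lambda|^2/\lambda^2 = n+1 - (\cdots)^2/(\cdots)\le 1-\delta$. So one gets
\begin{align*}
\bigl| F_\gamma^M[(w_1',u_1')] - F_\gamma^M[(w_2',u_2')] - L_\lambda^M[(w_1'-w_2',u_1'-u_2')] \bigr| \le \sqrt{1-\delta}\,\sqrt{|w_1'-w_2'|^2 + 2\lambda|u_1'-u_2'|^2 + \lambda^2(u_1'-u_2')^2}
\end{align*}
pointwise (with the $\nabla u'$ term suitably interpreted), and in particular with $(w_2',u_2')=(\nabla u,u)$ this controls $F_\gamma^M[(w',u')]$ against $L_\lambda^M$ of the error.

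Next I would set up the coercivity. The natural test object is to solve an auxiliary linear periodic problem: given the error, find $\Phi\in H^2_{\mathrm{per}}(Y)$ (or rather the mixed pair realizing $L_\lambda^M$-duality) and use that $\|L_\lambda\Phi\|_{L^2}^2$ equals the full $\vertiii{\cdot}_\lambda$-type energy by the periodic integration-by-parts identity \eqref{Llambda norm}. The subtlety compared to the $H^2$ case is that $w'-\nabla u$ is a general $L^2$ vector field, not a gradient, so one cannot directly say its pairing against $L_\lambda$ of something reproduces $\|\nabla(w'-\nabla u)\|^2$. Here is where $\mathrm{rot}(w')$ and $\nabla u'-w'$ enter: I would use a Helmholtz-type decomposition of $w'-\nabla u$ (on the periodic cell, after subtracting means) into a gradient part and a divergence-free part, bound the divergence-free part's contribution by $\|\mathrm{rot}(w')\|_{L^2}$ (since $\mathrm{rot}(\nabla u)=0$), and bound the discrepancy between $\nabla u'$ and $w'$ directly by the term $\|\nabla u'-w'\|_{L^2}$. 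This is precisely the mechanism from \cite{GSS21} and I would cite their mixed-formulation lemma for the clean statement of the decomposition estimate.

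Assembling: write $\vertiii{(\nabla u - w', u-u')}_\lambda^2$, insert the definition of $L_\lambda^M$-duality to produce a term $\int_Y L_\lambda^M[\text{error}]\cdot(\text{test})$, replace $L_\lambda^M[\text{error}]$ by $F_\gamma^M[(w',u')] - (F_\gamma^M[(w',u')]-L_\lambda^M[\text{error}])$, use the Cordes pointwise bound on the second piece to absorb a $\sqrt{1-\delta}$ multiple of the energy back into the left side (exactly as $1-\sqrt{1-\delta}>0$ in Theorem \ref{thm well-pos}), apply Cauchy–Schwarz to the first piece to pull out $\|F_\gamma^M[(w',u')]\|_{L^2}$, and finally handle the non-gradient defect of $w'$ via the Helmholtz estimate to produce the $\|\mathrm{rot}(w')\|_{L^2}^2$ and $\|\nabla u'-w'\|_{L^2}^2$ terms. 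The constants depend only on $\delta$ (through $1/(1-\sqrt{1-\delta})$) and $\lambda$ (through $C_\lambda$ in \eqref{Llambda norm} and the Poincaré/Helmholtz constants on $Y$), as claimed. The main obstacle is the bookkeeping in the Helmholtz step — making precise how much of $w'-\nabla u$ is ``irrotational modulo a gradient of an $H^1$ function'' and ensuring the mean-zero normalization on $W_{\mathrm{per}}$ is compatible with the periodic $L_\lambda$-duality — but this is routine given the machinery of \cite{GSS21}, and I would lean on their results rather than redo it.
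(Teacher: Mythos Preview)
Your overall strategy is sound and closely related to the paper's, but the paper organizes the argument differently and sidesteps the explicit Helmholtz bookkeeping you flag as ``the main obstacle''. Rather than choosing a test function $\Phi$ and then repairing the non-gradient defect of $w'-\nabla u$ afterwards, the paper builds the rot and gradient-defect terms directly into a stabilized semilinear form
\[
a^M\bigl((w_1,u_1),(w_2,u_2)\bigr) := \int_Y F_\gamma^M[(w_1,u_1)](\lambda u_2 - \nabla\cdot w_2) + \sigma_1\!\int_Y \mathrm{rot}(w_1)\cdot\mathrm{rot}(w_2) + \sigma_2\!\int_Y(\nabla u_1 - w_1)\cdot(\nabla u_2 - w_2)
\]
with explicit constants $\sigma_1,\sigma_2$ depending only on $\delta,\lambda$, and invokes \cite{GSS21} for the monotonicity estimate $C_\delta\vertiii{\cdot}_\lambda^2 \le a^M(\cdot)-a^M(\cdot)$ on $X$. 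Since the exact pair $(\nabla u,u)$ annihilates all three pieces of $a^M$, testing with the error pair $(\delta_w,\delta_u)$ itself and applying Cauchy--Schwarz, Young, and the elementary bound $\|\nabla\cdot\delta_w\|_{L^2}\le\|\nabla\delta_w\|_{L^2}$ finishes the proof in a few lines. No auxiliary periodic problem for a $\Phi\in H^2_{\mathrm{per}}$ is solved; the test object is just $(\delta_w,\delta_u)\in X$.

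Two concrete issues in your write-up. First, your displayed mixed Cordes bound is wrong as written: the right-hand side must involve $|\nabla(w_1'-w_2')|^2$ and $|\nabla(u_1'-u_2')|^2$, not $|w_1'-w_2'|^2$ and $|u_1'-u_2'|^2$, since the Cordes computation contracts $(-\gamma A + I)$ against $\nabla\delta_w$ and $-\gamma b$ against $\nabla\delta_u$. Second, the detour through an auxiliary $\Phi\in H^2_{\mathrm{per}}$ is both unnecessary and misleading: in the mixed setting the natural pairing is $\int_Y F_\gamma^M[(w',u')](\lambda\delta_u - \nabla\cdot\delta_w)$, and the Helmholtz content you anticipate is exactly what the stabilization terms $\sigma_1,\sigma_2$ in $a^M$ encode. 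Your route can be made to work, but it amounts to rederiving the monotonicity lemma of \cite{GSS21} by hand; the paper simply cites it.
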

\begin{proof}
We define the semilinear form $a^M:X\times X\rightarrow \R$ by
\begin{align*}
&a^M\left((w_1,u_1),(w_2,u_2) \right)\\ &\quad:=\int_Y F_{\gamma}^M[(w_1,u_1)](\lambda u_2-\nabla\cdot w_2) + \sigma_1 \int_Y \mathrm{rot}(w_1)\cdot \mathrm{rot}(w_2)+\sigma_2\int_Y (\nabla u_1-w_1)\cdot (\nabla u_2-w_2)
\end{align*}
with $\sigma_1,\sigma_2>0$ given by 
\begin{align*}
\sigma_1:= 1-\frac{1}{2}\sqrt{1-\delta},\qquad \sigma_2:=\frac{\lambda}{2}(1-\sqrt{1-\delta})+\frac{\lambda}{4}(1-\sqrt{1-\delta})^{-1}.
\end{align*}
A straightforward adaptation of the proof of \cite[Lemma 2.3]{GSS21} yields the monotonicity estimate
\begin{align*}
C_{\delta}\vertiii{(w_1-w_2,u_1-u_2)}_{\lambda}^2 \leq a^M\left((w_1,u_1),(w_1-w_2,u_1-u_2) \right)-a^M\left((w_2,u_2),(w_1-w_2,u_1-u_2) \right)
\end{align*}
for all $(w_1,u_1),(w_2,u_2)\in X$, where $C_{\delta}>0$ is a constant only depending on $\delta$. In particular, in view of \eqref{Fgamma mixed equ}, we find that
\begin{align}\label{aM ineq}
C_{\delta}\vertiii{(\nabla u-w',u-u')}_{\lambda}^2\leq -a^M\left((w',u'),(\nabla u-w',u-u')\right)\qquad\forall\,(w',u')\in X.
\end{align}

Let $(w',u')\in X$ be arbitrary and write $(\delta_w,\delta_u):=(\nabla u-w',u-u')$. Using the Cauchy--Schwarz and Young inequalities to bound the right-hand side of \eqref{aM ineq}, we have
\begin{align*}
C_{\delta} &\vertiii{(\delta_w,\delta_u)}_{\lambda}^2 \leq \left\lvert a^M\left((w',u'),(\delta_w,\delta_u)\right)\right\rvert \\ &\hspace{1cm}\leq \frac{1}{C_{\delta}}\|F_{\gamma}^M[(w',u')]\|_{L^2(Y)}^2+ \frac{C_{\delta}}{4}\|\lambda\delta_u-\nabla\cdot \delta_w\|_{L^2(Y)}^2+\sigma_1\|\mathrm{rot}(w')\|_{L^2(Y)}^2 +\sigma_2\|\nabla u' -w'\|_{L^2(Y)}^2
\end{align*}
and we can conclude that
\begin{align*}
\vertiii{(\delta_w,\delta_u)}_{\lambda}^2\leq \frac{1}{C_{\delta}^2}\|F_{\gamma}^M[(w',u')]\|_{L^2(Y)}^2+ \frac{1}{2}\vertiii{(\delta_w,\delta_u)}_{\lambda}^2+\frac{\sigma_1}{C_{\delta}}\|\mathrm{rot}(w')\|_{L^2(Y)}^2 +\frac{\sigma_2}{C_{\delta}}\|\nabla u' -w'\|_{L^2(Y)}^2
\end{align*}
upon noting $\|\nabla\cdot \delta_w\|_{L^2(Y)}\leq \|\nabla \delta_w\|_{L^2(Y)}$ as $\delta_w\in H^1_{\mathrm{per}}(Y;\R^n)$; see \cite{GSS21}. Finally, absorbing the term $\frac{1}{2}\vertiii{(\delta_w,\delta_u)}_{\lambda}^2$ into the left-hand side of the above inequality, we obtain the desired estimate.
\end{proof}

We can use Lemma \ref{Lemma mixed} and the $H^1_{\mathrm{per}}$-enrichment operators to prove the following \textit{a posteriori} error bound:

\begin{thrm}[\textit{a posteriori} error bound]\label{thm: a posteriori}
Let $u\in H^2_{\mathrm{per}}(Y)$ denote the unique solution to the HJBI problem \eqref{HJBI}. Then there holds
\begin{align*}
\|u-v_{\calT}\|_{\calT,\lambda}^2\lesssim \int_Y \left\lvert F_{\gamma}[v_{\calT}]\right\rvert^2+\lvert v_{\calT}\rvert_{J,\calT}^2\qquad \forall v_{\calT}\in V^0_{\calT}
\end{align*}
with the constant absorbed in $\lesssim$ only depending on $n,\theta_{\calT},\bar{p}$ and the Cordes parameters $\delta,\lambda$.
\end{thrm}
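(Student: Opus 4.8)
The plan is to reduce the a posteriori estimate for the $C^0$-IP/DG energy norm to the mixed a posteriori bound of Lemma~\ref{Lemma mixed} by applying the periodic enrichment operators $E_1$ and $E_1^g$ to $v_{\calT}$ and to $\nabla v_{\calT}$ respectively. First I would fix $v_{\calT}\in V^0_{\calT}$ and set $u' := E_1 v_{\calT}\in V^1_{\calT}\subset H^1_{\mathrm{per}}(Y)$ and $w' := E_1^g(\nabla v_{\calT})\in W_{\calT}\cap H^1_{\mathrm{per}}(Y;\R^n)$; after subtracting the mean of $u'$ (which does not affect any of the seminorms involved) one gets an admissible pair $(w',u')\in X$ to which Lemma~\ref{Lemma mixed} applies. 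This yields
\begin{align*}
\vertiii{(\nabla u-w',\,u-u')}_{\lambda}^2 \lesssim \|F_{\gamma}^M[(w',u')]\|_{L^2(Y)}^2+\|\mathrm{rot}(w')\|_{L^2(Y)}^2+\|\nabla u'-w'\|_{L^2(Y)}^2 .
\end{align*}

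Next I would control each of the three terms on the right by the computable quantity $\int_Y|F_{\gamma}[v_{\calT}]|^2+|v_{\calT}|_{J,\calT}^2$. For the first term, add and subtract $F_{\gamma}^M[(\nabla v_{\calT},v_{\calT})]=F_{\gamma}[v_{\calT}]$ (the piecewise identity) and use the Lipschitz property of $F_{\gamma}^M$ inherited from the Cordes condition, so that $\|F_{\gamma}^M[(w',u')]-F_{\gamma}[v_{\calT}]\|_{L^2(Y)}$ is bounded by $\|\nabla v_{\calT}-w'\|_{L^2(Y)}+\|\nabla(v_{\calT}-u')\|_{L^2(Y)}+\lambda\|v_{\calT}-u'\|_{L^2(Y)}$ (a mixed version of \eqref{Fgamma prop}, which follows by the same $\inf\sup$ estimate and Cauchy--Schwarz argument used in Lemma~\ref{Lmm: consequence of Cordes}). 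For $\mathrm{rot}(w')$: since $w'=E_1^g(\nabla v_{\calT})$ and $\mathrm{rot}(\nabla v_{\calT})=0$ elementwise, $\|\mathrm{rot}(w')\|_{L^2(Y)}=\|\mathrm{rot}(w'-\nabla v_{\calT})\|_{L^2(Y)}\lesssim\|\nabla(w'-\nabla v_{\calT})\|_{L^2(Y)}$, which is handled by the enrichment bound \eqref{ET bound} summed over $K\in\calT$. For $\|\nabla u'-w'\|_{L^2(Y)}$, insert $\nabla v_{\calT}$ and bound by $\|\nabla(u'-v_{\calT})\|_{L^2(Y)}+\|\nabla v_{\calT}-w'\|_{L^2(Y)}$, again via \eqref{E1 bound} and \eqref{ET bound}. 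Summing \eqref{E1 bound} over all elements and using that $\sum_{K\in\calT}\int_{\calF_K}\le C(n,\theta_\calT,\bar p)\int_{\calF}$ gives $\|v_{\calT}-E_1v_{\calT}\|$-type quantities in all relevant (weighted) norms controlled by $|v_{\calT}|_{J,\calT}^2$; likewise \eqref{ET bound} applied to $w_\calT=\nabla v_{\calT}$ controls $\|\nabla v_{\calT}-w'\|_{L^2(Y)}^2$ and $\|h_\calT^{-1}(\nabla v_{\calT}-w')\|_{L^2(Y)}^2$ by $\int_{\calF}h_\calT^{-1}|\llbracket\nabla v_{\calT}\rrbracket|^2\le|v_{\calT}|_{J,\calT}^2$. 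Collecting these, the right-hand side of the mixed bound is $\lesssim\int_Y|F_{\gamma}[v_{\calT}]|^2+|v_{\calT}|_{J,\calT}^2$.

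Finally I would pass from the mixed quantity back to $\|u-v_{\calT}\|_{\calT,\lambda}$. By the triangle inequality,
\begin{align*}
\|u-v_{\calT}\|_{\calT,\lambda}\le \|u-E_1v_{\calT}\|_{\calT,\lambda}+\|E_1v_{\calT}-v_{\calT}\|_{\calT,\lambda},
\end{align*}
where $E_1v_{\calT}\in V^1_{\calT}$ has no jumps in its values but may still have jumps in its gradient, so its $|\cdot|_{J,\calT}$ seminorm reduces to $\int_{\calF}h_\calT^{-1}|\llbracket\nabla E_1v_{\calT}\rrbracket|^2$; this, together with the $H^2$-broken part $\int_Y(|\nabla^2(u-E_1v_{\calT})|^2+2\lambda|\nabla(u-E_1v_{\calT})|^2+\lambda^2(u-E_1v_{\calT})^2)$, must be estimated. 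Since $u\in H^2_{\mathrm{per}}(Y)$, $\llbracket\nabla E_1v_{\calT}\rrbracket=\llbracket\nabla(E_1v_{\calT}-u)\rrbracket$, so by a trace/inverse inequality on each face it is bounded by $\|\nabla^2(E_1v_{\calT}-u)\|$ plus oscillation terms, hence by the broken $H^2$ part; the latter is exactly $\vertiii{(\nabla(E_1 v_{\calT}),E_1v_{\calT}) - (\nabla u,u)}_\lambda$ up to the discrepancy $\|\nabla E_1 v_{\calT}-\nabla u\|$ versus the mixed variable $w'$, which we bound by inserting $w'$ and using $\|\nabla E_1 v_{\calT}-w'\|\le\|\nabla(E_1v_{\calT}-v_{\calT})\|+\|\nabla v_{\calT}-w'\|$ controlled above. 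The second term $\|E_1v_{\calT}-v_{\calT}\|_{\calT,\lambda}^2$ is controlled directly by \eqref{E1 bound} summed over $\calT$ (its $|\cdot|_{J,\calT}$ part because $\llbracket E_1v_{\calT}-v_{\calT}\rrbracket=-\llbracket v_{\calT}\rrbracket$ and $\llbracket\nabla(E_1v_{\calT}-v_{\calT})\rrbracket=-\llbracket\nabla v_{\calT}\rrbracket$, its volume part by the $m=1,2$ cases of \eqref{E1 bound}). I expect the main obstacle to be the bookkeeping in this last step: matching the broken $H^2$-norm (which involves $\nabla^2 v_{\calT}$ pointwise) against the mixed norm (which only sees $\nabla w'$), and in particular controlling $\|\nabla^2(u-v_{\calT})\|_{L^2(Y)}$ — one only gets $\|\nabla(w'-\nabla u)\|$ from the mixed bound, so one must additionally bound $\|\nabla^2 v_{\calT}-\nabla w'\|_{L^2(Y)}=\|\nabla(\nabla v_{\calT}-w')\|_{L^2(Y)}$, which is precisely the quantity supplied by \eqref{ET bound} with $w_\calT=\nabla v_{\calT}$. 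Once these enrichment estimates are assembled the result follows, with the constant depending only on $n,\theta_\calT,\bar p$ and $\delta,\lambda$ as claimed.
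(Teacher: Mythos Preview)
Your overall strategy coincides with the paper's: enrich $v_{\calT}$ and $\nabla v_{\calT}$ via $E_1$ and $E_1^g$ to obtain a pair in $X$, invoke Lemma~\ref{Lemma mixed}, and bound the three residual terms using \eqref{E1 bound}--\eqref{ET bound} and the Lipschitz property of $F_\gamma^M$. Two points need repair, however. First, the mean must be subtracted from $w'=E_1^g(\nabla v_{\calT})$, not from $u'$: the space $X=W_{\mathrm{per}}(Y;\R^n)\times H^1_{\mathrm{per}}(Y)$ imposes zero mean on the \emph{first} component, and subtracting a constant from $u'$ \emph{would} alter both $\lambda^2\|u-u'\|_{L^2}^2$ and the zeroth-order term $c^{\alpha\beta}u'$ inside $F_\gamma^M$, contrary to your claim. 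Second, your control of $\int_{\calF}h_{\calT}^{-1}\lvert\llbracket\nabla E_1v_{\calT}\rrbracket\rvert^2$ via a trace/inverse inequality applied to $E_1v_{\calT}-u$ does not go through: $u$ is not piecewise polynomial, so the trace inequality leaves an uncontrolled $h_{\calT}^{-2}\|\nabla(E_1v_{\calT}-u)\|_{L^2(K)}^2$ term (your ``oscillation terms''). Relatedly, $\llbracket\nabla(E_1v_{\calT}-v_{\calT})\rrbracket\neq-\llbracket\nabla v_{\calT}\rrbracket$ in general, since $E_1v_{\calT}\in H^1_{\mathrm{per}}$ but not $H^2_{\mathrm{per}}$.

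The paper avoids this bookkeeping by performing the triangle inequality \emph{before} the mixed lemma rather than after. Since $u\in H^2_{\mathrm{per}}(Y)$ one has $\lvert u-v_{\calT}\rvert_{J,\calT}=\lvert v_{\calT}\rvert_{J,\calT}$ outright, and for the volume part one inserts $\nabla w$ (with $w:=E_1^g(\nabla v_{\calT})-\int_Y E_1^g(\nabla v_{\calT})$) directly into $\nabla^2(u-v_{\calT})$, obtaining
\begin{align*}
\|u-v_{\calT}\|_{\calT,\lambda}^2\lesssim\vertiii{(\nabla u-w,\,u-E_1v_{\calT})}_\lambda^2+\int_Y\lvert\nabla(w-\nabla v_{\calT})\rvert^2+\text{(lower order)}+\lvert v_{\calT}\rvert_{J,\calT}^2.
\end{align*}
The extra volume terms are absorbed by \eqref{E1 bound}--\eqref{ET bound} and one never needs to estimate $\llbracket\nabla E_1v_{\calT}\rrbracket$ on faces. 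You correctly identify the key ingredient $\|\nabla(\nabla v_{\calT}-w')\|_{L^2(Y)}\lesssim\lvert v_{\calT}\rvert_{J,\calT}$ from \eqref{ET bound} at the end of your sketch; reorganising as above makes it the only additional estimate required.
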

\begin{proof}
Let $v_{\calT}\in V^0_{\calT}$ be arbitrary and set 
\begin{align*}
v&:=E_1 v_{\calT}\in V^0_{\calT}\cap H^1_{\mathrm{per}}(Y),\\ w&:=E_1^g(\nabla v_{\calT})-\int_Y E_1^g(\nabla v_{\calT})\in W_{\calT}\cap W_{\mathrm{per}}(Y;\R^n).
\end{align*}
By the triangle inequality, we have 
\begin{align*}
\|u-v_{\calT}\|_{\calT,\lambda}^2  &\lesssim \vertiii{(\nabla u-w,u-v)}_{\lambda}^2 \\ &\quad + \int_Y \left(\lvert \nabla(w-\nabla v_{\calT})\rvert^2 +2\lambda\lvert \nabla (v-v_{\calT})\rvert^2+ \lambda^2 (v-v_{\calT})^2\right)+\lvert v_{\calT}\rvert_{J,\calT}^2,
\end{align*}
which we can further bound, using the properties of the enrichment operators \eqref{E1 bound} and \eqref{ET bound}, to obtain that 
\begin{align*}
\|u-v_{\calT}\|_{\calT,\lambda}^2 \lesssim  \vertiii{(\nabla u-w,u-v)}_{\lambda}^2+\lvert v_{\calT}\rvert_{J,\calT}^2.
\end{align*}
We can apply Lemma \ref{Lemma mixed} to find
\begin{align}\label{three terms}
\|u-v_{\calT}\|_{\calT,\lambda}^2 \lesssim \|F_{\gamma}^M[(w,v)]\|_{L^2(Y)}^2+ \|\mathrm{rot}(w)\|_{L^2(Y)}^2 +\|\nabla v -w\|_{L^2(Y)}^2 + \lvert v_{\calT}\rvert_{J,\calT}^2.
\end{align}
Note that, using the triangle and H\"{o}lder inequalities, and the enrichment bounds \eqref{E1 bound} and \eqref{ET bound}, we have
\begin{align*}
\|\mathrm{rot}(w)\|_{L^2(Y)}^2 = \int_Y \lvert \mathrm{rot}(w-\nabla v_{\calT})  \rvert^2 \lesssim \lvert v_{\calT}\rvert_{J,\calT}^2
\end{align*}
for the second term on the right-hand side of \eqref{three terms}, and
\begin{align*}
\|\nabla v -w\|_{L^2(Y)}^2 &\lesssim \left\|\nabla v - E_1^g(\nabla v_{\calT})-\int_Y \left(\nabla v -E_1^g(\nabla v_{\calT})\right)  \right\|_{L^2(Y)}^2    \\ &\lesssim  \left\|\nabla v - E_1^g(\nabla v_{\calT}) \right\|_{L^2(Y)}^2 \\ &\lesssim    \int_Y \lvert \nabla(v-v_{\calT})\rvert^2 + \int_Y \lvert \nabla v_{\calT}-E_1^g(\nabla v_{\calT})\rvert^2 \\ &\lesssim \lvert v_{\calT}\rvert_{J,\calT}^2 
\end{align*}
for the third term on the right-hand side of \eqref{three terms} (note that $\int_Y \nabla v = 0$ for $v\in H^1_{\mathrm{per}}(Y)$). Finally, for the first term on the right-hand side of \eqref{three terms}, we successively use the triangle inequality together with $F_{\gamma}[v_{\calT}]=F_{\gamma}^M[(\nabla v_{\calT},v_{\calT})]$, a Lipschitz property of $F_{\gamma}^M$ which is shown analogously to \eqref{Fgamma prop}, and the enrichment bounds \eqref{E1 bound} and \eqref{ET bound} to obtain
\begin{align*}
\|F_{\gamma}^M[(w,v)]\|_{L^2(Y)}^2&\lesssim \int_Y \left\lvert F_{\gamma}[v_{\calT}]\right\rvert^2+\int_Y \left\lvert F_{\gamma}^M[(w,v)]-F_{\gamma}^M[(\nabla v_{\calT},v_{\calT})]\right\rvert^2\\
&\lesssim  \int_Y \left\lvert F_{\gamma}[v_{\calT}]\right\rvert^2+\int_Y \left(\lvert \nabla(w-\nabla v_{\calT})\rvert^2+2\lambda \lvert \nabla(v- v_{\calT})\rvert^2+\lambda^2  \lvert v- v_{\calT}\rvert^2\right)\\&\lesssim  \int_Y \left\lvert F_{\gamma}[v_{\calT}]\right\rvert^2+\lvert v_{\calT}\rvert_{J,\calT}^2 .
\end{align*}
Altogether, in view of \eqref{three terms}, we have proved the desired estimate.
\end{proof}

This concludes the \textit{a posteriori} analysis and we proceed with an abstract \textit{a priori} analysis for a wide class of numerical schemes in the next section.

\subsection{Numerical scheme and \textit{a priori} analysis}\label{Sec a priori analysis}

Let us consider an abstract numerical scheme written in the following form: For chosen $s\in\{0,1\}$, find a function $u_{\calT}\in V^s_{\calT}$ satisfying
\begin{align}\label{Abstract numerical scheme}
a_{\calT}(u_{\calT},v_{\calT})=0\qquad \forall v_{\calT}\in V_{\calT}^s.
\end{align} 

\subsubsection{Abstract \textit{a priori} analysis}

Here, we assume that the nonlinear form $a_{\calT}:V_{\calT}^s\times V_{\calT}^s\rightarrow\R$ satisfies the assumptions listed below:
\begin{itemize}
\item[(A1)] Linearity in second argument: $a_{\calT}(w_{\calT},\cdot\,):V_{\calT}^s\rightarrow\R$ is linear for any fixed $w_{\calT}\in V_{\calT}^s$.
\item[(A2)] Strong monotonicity: There exists a constant $C_M>0$ such that 
\begin{align*}
\|w_{\calT}-v_{\calT}\|^2_{\calT,\lambda}\leq C_M\left(a_{\calT}(w_{\calT},w_{\calT}-v_{\calT})-a_{\calT}(v_{\calT},w_{\calT}-v_{\calT})  \right)\qquad \forall w_{\calT},v_{\calT}\in V^s_{\calT}.
\end{align*}
\item[(A3)] Lipschitz continuity: There exists a constant $C_L>0$ such that 
\begin{align*}
\left\lvert a_{\calT}(w_{\calT},v_{\calT})-a_{\calT}(w_{\calT}',v_{\calT})\right\rvert\leq C_L \|w_{\calT}-w'_{\calT}\|_{\calT,\lambda}\|v_{\calT}\|_{\calT,\lambda}\qquad \forall w_{\calT},w'_{\calT},v_{\calT}\in V^s_{\calT}.
\end{align*}
\item[(A4)] Discrete consistency: There exists a linear operator $L_{\calT}:V^s_{\calT}\rightarrow L^2(Y)$ such that, for some constant $C_1>0$, we have 
\begin{align*}
\|L_{\calT} v_{\calT}\|_{L^2(Y)}\leq C_1 \|v_{\calT}\|_{\calT,\lambda}\qquad \forall v_{\calT}\in V^s_{\calT},
\end{align*}
and, for some constant $C_2>0$, we have
\begin{align*}
\left\lvert a(w_{\calT},v_{\calT})-\int_Y F_{\gamma}[w_{\calT}]L_{\calT}v_{\calT} \right\rvert\leq C_2\lvert w_{\calT}\rvert_{J,\calT}\|v_{\calT}\|_{\calT,\lambda}\qquad \forall w_{\calT},v_{\calT}\in V^s_{\calT}.
\end{align*}   
\end{itemize}

Observe that the assumptions (A1)--(A4) guarantee well-posedness of the numerical scheme, that is, there exists a unique solution $u_{\calT}\in V^s_{\calT}$ satisfying \eqref{Abstract numerical scheme}. We can show an \textit{a priori} bound in this general setting similarly to \cite{KS21}. 

\begin{thrm}[\textit{a priori} error bound]\label{Tm: a priori bound}
For chosen $s\in\{0,1\}$, let $a_{\calT}:V^s_{\calT}\times V^s_{\calT}\rightarrow \R$ be a nonlinear form satisfying the assumptions (A1)--(A4). Further, let $u\in H^2_{\mathrm{per}}(Y)$ denote the unique solution to the HJBI problem \eqref{HJBI}. Then, there exists a unique solution $u_{\calT}\in V^s_{\calT}$ to \eqref{Abstract numerical scheme} and we have the near-best approximation bound
\begin{align}\label{a priori bound}
\|u-u_{\calT}\|_{\calT,\lambda}\leq C_e \inf_{v_{\calT}\in V^s_{\calT}}\|u-v_{\calT}\|_{\calT,\lambda},
\end{align}
where the constant $C_e>0$ is given by
\begin{align}\label{constant Ce}
C_e:= 1+C_M\left(C_1\left(\sqrt{1-\delta}+\sqrt{n+1}\right)+C_2\right).
\end{align}
\end{thrm}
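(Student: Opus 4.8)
The plan is to establish well-posedness of \eqref{Abstract numerical scheme} by a standard monotone-operator argument on the finite-dimensional space $V^s_\calT$, and then to derive the near-best approximation bound \eqref{a priori bound} by a discrete C\'ea-type estimate exploiting the Galerkin identity $a_\calT(u_\calT,u_\calT-v_\calT)=0$ together with the consistency assumption (A4). This mirrors, at the discrete level, the well-posedness proof of Theorem \ref{thm well-pos}.

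\medskip

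\textbf{Well-posedness.} I would view $a_\calT$ as inducing, via (A1), an operator $G:V^s_\calT\to(V^s_\calT)^*$ with $\langle G(w_\calT),v_\calT\rangle:=a_\calT(w_\calT,v_\calT)$; then (A2) gives strong monotonicity and (A3) gives Lipschitz continuity of $G$. Since $V^s_\calT$ is finite-dimensional, these two properties already force $G$ to be a bijection (by the Browder--Minty theorem, exactly as in the proof of Theorem \ref{thm well-pos}, or by an elementary Brouwer-degree argument), yielding a unique $u_\calT\in V^s_\calT$ solving \eqref{Abstract numerical scheme}.

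\medskip

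\textbf{A Lipschitz bound for $F_\gamma$ on discrete functions.} Fix $v_\calT\in V^s_\calT$. Applying Lemma \ref{Lmm: consequence of Cordes} on each element $K\in\calT$ to the pair $u$, $\left.v_\calT\right\rvert_K$ and arguing as for \eqref{Fgamma prop}, I obtain the a.e.\ pointwise bound (with piecewise Hessian and gradient on the right)
\begin{align*}
\left\lvert F_\gamma[v_\calT]\right\rvert=\left\lvert F_\gamma[v_\calT]-F_\gamma[u]\right\rvert\leq\bigl(\sqrt{1-\delta}+\sqrt{n+1}\bigr)\sqrt{\lvert\nabla^2(u-v_\calT)\rvert^2+2\lambda\lvert\nabla(u-v_\calT)\rvert^2+\lambda^2(u-v_\calT)^2},
\end{align*}
where I used $F_\gamma[u]=0$. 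Squaring, integrating over $Y$, and noting that $\lvert\cdot\rvert_{J,\calT}^2\geq0$ is part of $\|\cdot\|_{\calT,\lambda}^2$, this gives $\|F_\gamma[v_\calT]\|_{L^2(Y)}\leq(\sqrt{1-\delta}+\sqrt{n+1})\|u-v_\calT\|_{\calT,\lambda}$. Likewise, since $u\in H^2_{\mathrm{per}}(Y)$ has vanishing jumps $\llbracket u\rrbracket=0$ and $\llbracket\nabla u\rrbracket=0$ on every face, $\lvert v_\calT\rvert_{J,\calT}=\lvert v_\calT-u\rvert_{J,\calT}\leq\|u-v_\calT\|_{\calT,\lambda}$.

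\medskip

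\textbf{The discrete C\'ea estimate.} Fix $v_\calT\in V^s_\calT$ and set $e_\calT:=u_\calT-v_\calT$. By (A2) and $a_\calT(u_\calT,e_\calT)=0$,
\begin{align*}
\|e_\calT\|_{\calT,\lambda}^2\leq C_M\bigl(a_\calT(u_\calT,e_\calT)-a_\calT(v_\calT,e_\calT)\bigr)=-C_M\,a_\calT(v_\calT,e_\calT).
\end{align*}
I then split $a_\calT(v_\calT,e_\calT)=\int_Y F_\gamma[v_\calT]\,L_\calT e_\calT+R$ with $\lvert R\rvert\leq C_2\lvert v_\calT\rvert_{J,\calT}\|e_\calT\|_{\calT,\lambda}$ by (A4), bound $\bigl\lvert\int_Y F_\gamma[v_\calT]\,L_\calT e_\calT\bigr\rvert\leq C_1\|F_\gamma[v_\calT]\|_{L^2(Y)}\|e_\calT\|_{\calT,\lambda}$ by Cauchy--Schwarz and the first part of (A4), and insert the two bounds of the previous step to get
\begin{align*}
\lvert a_\calT(v_\calT,e_\calT)\rvert\leq\Bigl(C_1\bigl(\sqrt{1-\delta}+\sqrt{n+1}\bigr)+C_2\Bigr)\|u-v_\calT\|_{\calT,\lambda}\,\|e_\calT\|_{\calT,\lambda}.
\end{align*}
Combining, cancelling a factor $\|e_\calT\|_{\calT,\lambda}$ (the case $e_\calT=0$ being trivial), and using the triangle inequality $\|u-u_\calT\|_{\calT,\lambda}\leq\|u-v_\calT\|_{\calT,\lambda}+\|e_\calT\|_{\calT,\lambda}$ gives $\|u-u_\calT\|_{\calT,\lambda}\leq C_e\|u-v_\calT\|_{\calT,\lambda}$ with $C_e$ as in \eqref{constant Ce}; taking the infimum over $v_\calT\in V^s_\calT$ finishes the proof.

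\medskip

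I do not expect a genuine obstacle here: the argument is the discrete shadow of Theorem \ref{thm well-pos} plus a C\'ea lemma. The points that need care are the elementwise application of Lemma \ref{Lmm: consequence of Cordes} to the merely piecewise-$H^2$ function $v_\calT$, the use of $F_\gamma[u]=0$ and of the vanishing jumps of $u$ to express the right-hand side purely through $\|u-v_\calT\|_{\calT,\lambda}$, and checking that (A1)--(A3) really do give discrete well-posedness (so that $u_\calT$ exists and is unique before one estimates it).
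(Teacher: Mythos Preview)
Your proposal is correct and follows essentially the same approach as the paper's proof: strong monotonicity plus the Galerkin identity gives $\|u_\calT-v_\calT\|_{\calT,\lambda}^2\leq C_M\,a_\calT(v_\calT,v_\calT-u_\calT)$, and then (A4), the elementwise Lipschitz bound \eqref{Fgamma prop} for $F_\gamma$ together with $F_\gamma[u]=0$, and $\lvert v_\calT\rvert_{J,\calT}=\lvert v_\calT-u\rvert_{J,\calT}$ yield the constant $C_e$ after a triangle inequality. Your explicit remarks on the elementwise application of Lemma~\ref{Lmm: consequence of Cordes} and the vanishing jumps of $u$ are exactly the points the paper uses implicitly.
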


\begin{proof}
As we have already noted, the existence and uniqueness of a solution $u_{\calT}\in V^s_{\calT}$ to \eqref{Abstract numerical scheme} follows from the assumptions on the nonlinear form $a_{\calT}$, and it only remains to show the near-best approximation bound \eqref{a priori bound}. To this end, let $v_{\calT}\in V^s_{\calT}$ be arbitrary and observe that 
\begin{align}\label{after mon}
\begin{split}
\|v_{\calT}-u_{\calT}\|_{\calT,\lambda}^2 &\leq C_M\left(a_{\calT}(v_{\calT},v_{\calT}-u_{\calT})-a_{\calT}(u_{\calT},v_{\calT}-u_{\calT})  \right)\\ &=C_M a_{\calT}(v_{\calT},v_{\calT}-u_{\calT}) 
\end{split}
\end{align}
by strong monotonicity (A2) and the solution property \eqref{Abstract numerical scheme} of $u_{\calT}$. In order to further bound the right-hand side, we successively use the discrete consistency (A4), the solution property and regularity of $u$, and the Lipschitz property \eqref{Fgamma prop} of $F_{\gamma}$ to obtain
\begin{align*}
a_{\calT}(v_{\calT},v_{\calT}-u_{\calT})&\leq \left\lvert \int_Y F_{\gamma}[v_{\calT}]L_{\calT}(v_{\calT}-u_{\calT})\right\rvert+C_2\lvert v_{\calT}\rvert_{J,\calT}\|v_{\calT}-u_{\calT}\|_{\calT,\lambda}\\&\leq \left(C_1\|F_{\gamma}[v_{\calT}]-F_{\gamma}[u]\|_{L^2(Y)}+C_2\lvert v_{\calT}-u\rvert_{J,\calT}\right)\|v_{\calT}-u_{\calT}\|_{\calT,\lambda}\\ &\leq \left(C_1\left(\sqrt{1-\delta}+\sqrt{n+1}\right)+C_2\right)\| v_{\calT}-u\|_{\calT,\lambda}\|v_{\calT}-u_{\calT}\|_{\calT,\lambda}.
\end{align*}
Combination with the previous estimate \eqref{after mon} yields
\begin{align*}
\|v_{\calT}-u_{\calT}\|_{\calT,\lambda}\leq C_M\left(C_1\left(\sqrt{1-\delta}+\sqrt{n+1}\right)+C_2\right)\| u-v_{\calT}\|_{\calT,\lambda},
\end{align*}
which in turn implies
\begin{align*}
\|u-u_{\calT}\|_{\calT,\lambda}\leq \|u-v_{\calT}\|_{\calT,\lambda}+\|v_{\calT}-u_{\calT}\|_{\calT,\lambda}\leq C_e\|u-v_{\calT}\|_{\calT,\lambda}
\end{align*}
with $C_e>0$ given by \eqref{constant Ce}. We conclude the proof by taking the infimum over $v_{\calT}\in V^s_{\calT}$.
\end{proof}

We conclude this section by noting that Theorem \ref{Tm: a priori bound} implies convergence of the numerical approximation under mesh-refinement. While convergence together with optimal rates follow immediately from standard approximation arguments in the case that the exact solution satisfies additional regularity assumptions, it is not that clear when we only have a minimal regularity solution $u\in H^2_{\mathrm{per}}(Y)$. For the latter case, we can argue as in \cite[Corollary 4.7]{KS21} and obtain the following result.

\begin{rmrk}[Convergence of the numerical approximation]
For a sequence of conforming simplicial meshes $\{\calT_k\}_k$ with $\max_{K\in\calT_k} h_K\rightarrow 0$ as $k\rightarrow\infty$, we have that
\begin{align*}
\inf_{v_{\calT_k}\in V^s_{\calT_k}}\|u-v_{\calT_k}\|_{\calT_k,\lambda}\underset{k\rightarrow\infty}{\longrightarrow} 0.
\end{align*}
In particular, in view of \eqref{a priori bound}, given $a_{\calT_k}:V^s_{\calT_k}\times V^s_{\calT_k}\rightarrow \R$ satisfying (A1)--(A4) with constants uniformly bounded in $k$, we have that
\begin{align*}
\|u-u_{\calT_k}\|_{\calT_k,\lambda}\underset{k\rightarrow\infty}{\longrightarrow} 0
\end{align*}
for the sequence of numerical approximations $\{u_{\calT_k}\}_k\subset V^s_{\calT_k}$.
\end{rmrk}

\subsubsection{The family of numerical schemes}\label{Sec family of num schemes}

For chosen $s\in\{0,1\}$ and a parameter $\theta\in[0,1]$, we now consider the numerical scheme of finding $u_{\calT}\in V^s_{\calT}$ satisfying \eqref{Abstract numerical scheme} with 
\begin{align*}
a_{\calT}:V^s_{\calT}\times V^s_{\calT}\rightarrow\R,\qquad a_{\calT}(w_{\calT},v_{\calT}):=\int_Y F_{\gamma}[w_{\calT}]L_{\lambda,\calT}v_{\calT} + \theta S_{\calT}(w_{\calT},v_{\calT})+J_{\calT}(w_{\calT},v_{\calT}),
\end{align*}
where we define the linear operator $L_{\lambda,\calT}v_{\calT}:=\lambda v_{\calT}-\Delta v_{\calT}$ for $v_{\calT}\in V_{\calT}^s$, the stabilization bilinear form $S_{\calT}:V^s_{\calT}\times V^s_{\calT}\rightarrow\R$ via
\begin{align*}
S_{\calT}(w_{\calT},v_{\calT}):=&\int_Y \left(\nabla^2 w_{\calT}:\nabla^2 v_{\calT}-\Delta w_{\calT}\Delta v_{\calT}\right)+ \int_{\calF} \left(\{\Delta_T w_{\calT}\}\llbracket \nabla v_{\calT}\cdot n\rrbracket + \{\Delta_T v_{\calT}\}\llbracket \nabla w_{\calT}\cdot n\rrbracket\right)\\ &-\int_{\calF} \left( \nabla_T\{\nabla w_{\calT}\cdot n\}\cdot \llbracket \nabla_T v_{\calT}\rrbracket + \nabla_T\{\nabla v_{\calT}\cdot n\}\cdot \llbracket \nabla_T w_{\calT}\rrbracket\right),
\end{align*}
and, for chosen parameters $\eta_1,\eta_2>0$, the jump penalization form $J_{\calT}:V^s_{\calT}\times V^s_{\calT}\rightarrow\R$ via
\begin{align*}
J_{\calT}(w_{\calT},v_{\calT}):= \eta_1\int_{\calF}  h_{\calT}^{-1}\llbracket \nabla w_{\calT}\rrbracket\cdot\llbracket\nabla v_{\calT}\rrbracket +\eta_2 \int_{\calF}  h_{\calT}^{-3}\llbracket w_{\calT}\rrbracket\llbracket v_{\calT}\rrbracket.
\end{align*}
Here, the tangential gradient and Laplacian on mesh faces are denoted by $\nabla_T$ and $\Delta_T$.

This scheme is an adaptation of the method presented in \cite{KS21} for the homogeneous Dirichlet problem. The analysis of this method, i.e., the verification of the assumptions (A1)--(A4), is analogous to \cite{KS21} and hence omitted. The main result is the following:
\begin{thrm}
There exist constants $\bar{\eta}_1,\bar{\eta}_2>0$, depending only on $n,\theta_{\calT},\theta,\bar{p}$ and the Cordes parameters $\delta,\lambda$, such that, for any $\theta\in[0,1]$, if $\eta_1\geq \bar{\eta}_1$ and $\eta_2\geq \bar{\eta}_2$, the properties (A1)--(A4) are satisfied and Theorem \ref{Tm: a priori bound} applies.
\end{thrm}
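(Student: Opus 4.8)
The plan is to verify each of the abstract assumptions (A1)--(A4) for the concrete form $a_{\calT}(w_{\calT},v_{\calT})=\int_Y F_{\gamma}[w_{\calT}]L_{\lambda,\calT}v_{\calT}+\theta S_{\calT}(w_{\calT},v_{\calT})+J_{\calT}(w_{\calT},v_{\calT})$, following the argument of \cite{KS21} with the modification that the Dirichlet boundary faces are replaced by boundary face-pairs and all integrations by parts are carried out over the periodic partition $\calT$. Assumption (A1) is immediate, since $v_{\calT}\mapsto L_{\lambda,\calT}v_{\calT}$ is linear, $S_{\calT}$ and $J_{\calT}$ are bilinear, and $F_{\gamma}[w_{\calT}]$ does not depend on $v_{\calT}$. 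For (A4) I would set $L_{\calT}:=L_{\lambda,\calT}$; the bound $\|L_{\lambda,\calT}v_{\calT}\|_{L^2(Y)}\leq C_1\|v_{\calT}\|_{\calT,\lambda}$ follows directly from the definition of the norm $\|\cdot\|_{\calT,\lambda}$ with $C_1$ depending only on $\lambda$, and the consistency estimate $|a_{\calT}(w_{\calT},v_{\calT})-\int_Y F_{\gamma}[w_{\calT}]L_{\lambda,\calT}v_{\calT}|=|\theta S_{\calT}(w_{\calT},v_{\calT})+J_{\calT}(w_{\calT},v_{\calT})|\leq C_2|w_{\calT}|_{J,\calT}\|v_{\calT}\|_{\calT,\lambda}$ is obtained by applying Cauchy--Schwarz together with discrete trace and inverse inequalities on the faces, noting that every term of $S_{\calT}$ either vanishes for $H^2_{\mathrm{per}}$-functions or carries a jump factor controlled by $|w_{\calT}|_{J,\calT}$, and similarly for $J_{\calT}$.

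The heart of the proof is the strong monotonicity (A2), and this is the step I expect to be the main obstacle. The strategy is the usual one: for $w_{\calT},v_{\calT}\in V^s_{\calT}$ with $\delta_{\calT}:=w_{\calT}-v_{\calT}$, write
\begin{align*}
a_{\calT}(w_{\calT},\delta_{\calT})-a_{\calT}(v_{\calT},\delta_{\calT})=\int_Y(F_{\gamma}[w_{\calT}]-F_{\gamma}[v_{\calT}])L_{\lambda,\calT}\delta_{\calT}+\theta S_{\calT}(\delta_{\calT},\delta_{\calT})+J_{\calT}(\delta_{\calT},\delta_{\calT}).
\end{align*}
One adds and subtracts $\int_Y L_{\lambda,\calT}\delta_{\calT}\cdot L_{\lambda,\calT}\delta_{\calT}$, so that the first term becomes $\|L_{\lambda,\calT}\delta_{\calT}\|_{L^2(Y)}^2+\int_Y(F_{\gamma}[w_{\calT}]-F_{\gamma}[v_{\calT}]-L_{\lambda}\delta_{\calT})L_{\lambda,\calT}\delta_{\calT}$, where the elementwise Cordes estimate of Lemma \ref{Lmm: consequence of Cordes} (applied on each $K\in\calT$) controls the second summand by $\sqrt{1-\delta}\,\|(\,|\nabla^2\delta_{\calT}|^2+2\lambda|\nabla\delta_{\calT}|^2+\lambda^2\delta_{\calT}^2)^{1/2}\|_{L^2}\|L_{\lambda,\calT}\delta_{\calT}\|_{L^2}$. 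Since $\|L_{\lambda,\calT}v\|_{L^2(Y)}^2$ no longer equals $\int_Y(|\nabla^2 v|^2+2\lambda|\nabla v|^2+\lambda^2 v^2)$ for discontinuous $v$ (the integration-by-parts identity $\|\Delta v\|=\|\nabla^2 v\|$ fails across faces), one must account for the face defect: the discrete integration-by-parts identity produces exactly the face terms appearing in $S_{\calT}$, so $\|L_{\lambda,\calT}\delta_{\calT}\|_{L^2(Y)}^2+\theta S_{\calT}(\delta_{\calT},\delta_{\calT})$ reconstitutes, up to controllable cross terms, the quantity $\int_Y(|\nabla^2\delta_{\calT}|^2+2\lambda|\nabla\delta_{\calT}|^2+\lambda^2\delta_{\calT}^2)$ plus a term that, after a trace/inverse inequality, is dominated by $|\delta_{\calT}|_{J,\calT}^2$ with a constant independent of $\eta_1,\eta_2$. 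Choosing $\eta_1\geq\bar\eta_1$ and $\eta_2\geq\bar\eta_2$ large enough makes $J_{\calT}(\delta_{\calT},\delta_{\calT})=\eta_1\int_{\calF}h_{\calT}^{-1}|\llbracket\nabla\delta_{\calT}\rrbracket|^2+\eta_2\int_{\calF}h_{\calT}^{-3}|\llbracket\delta_{\calT}\rrbracket|^2$ absorb those defect terms while leaving a positive multiple of $|\delta_{\calT}|_{J,\calT}^2$; combined with the $(1-\sqrt{1-\delta})$-coercivity coming from the Cordes part this yields $\|\delta_{\calT}\|_{\calT,\lambda}^2\leq C_M(a_{\calT}(w_{\calT},\delta_{\calT})-a_{\calT}(v_{\calT},\delta_{\calT}))$. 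The delicate bookkeeping is precisely in tracking the signs and the $\eta$-independence of the defect constants, which is why the thresholds $\bar\eta_1,\bar\eta_2$ must be allowed to depend on $n,\theta_{\calT},\theta,\bar p,\delta,\lambda$; I would quote \cite{KS21} for the detailed estimates and only indicate the periodic modifications.

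Finally, Lipschitz continuity (A3) follows by the same decomposition: $|a_{\calT}(w_{\calT},v_{\calT})-a_{\calT}(w_{\calT}',v_{\calT})|\leq\int_Y|F_{\gamma}[w_{\calT}]-F_{\gamma}[w_{\calT}']||L_{\lambda,\calT}v_{\calT}|+\theta|S_{\calT}(w_{\calT}-w_{\calT}',v_{\calT})|+|J_{\calT}(w_{\calT}-w_{\calT}',v_{\calT})|$, where the first term is handled by the Lipschitz bound \eqref{Fgamma prop} (applied elementwise) together with $\|L_{\lambda,\calT}v_{\calT}\|_{L^2(Y)}\leq C_1\|v_{\calT}\|_{\calT,\lambda}$, and the $S_{\calT}$ and $J_{\calT}$ contributions are bounded by $C\|w_{\calT}-w_{\calT}'\|_{\calT,\lambda}\|v_{\calT}\|_{\calT,\lambda}$ using Cauchy--Schwarz and the same discrete trace and inverse inequalities as in (A4). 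Once (A1)--(A4) are established with the stated constants, Theorem \ref{Tm: a priori bound} applies verbatim, giving well-posedness of the scheme and the near-best approximation bound, which completes the proof.
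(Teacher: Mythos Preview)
Your proposal is correct and follows exactly the approach the paper takes: the paper in fact omits the proof entirely, stating that the verification of (A1)--(A4) is analogous to \cite{KS21}, and your sketch reproduces that argument with the appropriate periodic modifications (boundary face-pairs in place of Dirichlet faces, elementwise Cordes estimate from Lemma~\ref{Lmm: consequence of Cordes}, discrete Miranda--Talenti identity encoded in $S_{\calT}$, absorption of face defects by $J_{\calT}$). The only minor imprecision is in your description of (A4): the volume term $\int_Y(\nabla^2 w_{\calT}{:}\nabla^2 v_{\calT}-\Delta w_{\calT}\Delta v_{\calT})$ of $S_{\calT}$ does not vanish on its own for $w_{\calT}\in H^2_{\mathrm{per}}$ when $v_{\calT}$ is discontinuous, but after elementwise integration by parts it combines with the explicit face terms of $S_{\calT}$ so that every surviving contribution carries a jump factor of $w_{\calT}$---this is precisely the identity from \cite{KS21} you intend to quote.
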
  
\begin{rmrk}
The constants $\bar{\eta}_1,\bar{\eta}_2$ and the constant $C_e$ in the near-best approximation bound \eqref{a priori bound} remain bounded as $\lambda\searrow 0$.
\end{rmrk}

\section{Approximation of Effective Hamiltonians to HJBI Operators}\label{Chapter 3}

\subsection{The effective Hamiltonian}\label{Sec3.1}

We start by recalling the definition of the effective Hamiltonian based on the \textit{cell $\sigma$-problem}; see \cite{AB01,AB10,ABM07}.

Let us consider an HJBI operator $F:\R^n\times \R^n\times \R^n\times \R^{n\times n}_{\mathrm{sym}}\rightarrow \R$ given by
\begin{align}\label{HJBI operator}
F(x,y,p,R):=\inf_{\alpha\in\calA}\sup_{\beta\in\calB}\left\{-A^{\alpha\beta}(y):R-b^{\alpha\beta}(x,y)\cdot p-f^{\alpha\beta}(x,y)  \right\}
\end{align}
with $\calA$ and $\calB$ denoting compact metric spaces, and functions 
\begin{align*}
A=(a_{ij})_{1\leq i,j\leq n}&:\R^n\times\calA\times\calB\rightarrow \R^{n\times n}_{\mathrm{sym}}, &(y,\alpha,\beta)&\mapsto A(y,\alpha,\beta)=:A^{\alpha\beta}(y),\\ b=(b_i)_{1\leq i\leq n}&:\R^n\times\R^n\times\calA\times\calB\rightarrow \R^{n}, &(x,y,\alpha,\beta)&\mapsto b(x,y,\alpha,\beta)=:b^{\alpha\beta}(x,y),\\ f&:\R^n\times\R^n\times\calA\times\calB\rightarrow \R, &(x,y,\alpha,\beta)&\mapsto f(x,y,\alpha,\beta)=:f^{\alpha\beta}(x,y)
\end{align*}
satisfying the assumptions stated below in paragraph \ref{Ass on coef}.  

To the HJBI operator \eqref{HJBI operator}, we associate the corresponding \textit{cell $\sigma$-problem}: for fixed $(x,p,R)\in \R^n\times \R^n\times \R^{n\times n}_{\mathrm{sym}}$ and a positive parameter $\sigma>0$, there exists a unique viscosity solution $v^{\sigma}=v^{\sigma}(\cdot\,;x,p,R)\in C(\R^n)$ to the problem
\begin{align}\label{cell sigma-problem}
\left\{\begin{aligned}
\sigma v^{\sigma}+F(x,y,p,R+\nabla_y^2 v^{\sigma})=0\quad\text{for }y\in Y,\\ y\mapsto v^{\sigma}(y;x,p,R)\text{ is $Y$-periodic}.
\end{aligned}\right.
\end{align}
The function $v^{\sigma}(\cdot\,;x,p,R)$ is called an \textit{approximate corrector}.

\begin{dfntn}[Ergodicity and effective Hamiltonian]
Let $F:\R^n\times \R^n\times \R^n\times \R^{n\times n}_{\mathrm{sym}}\rightarrow \R$ be an HJBI operator of the form \eqref{HJBI operator}. 
\begin{itemize}
\item[(i)] We say $F$ is ergodic (in the $y$-variable) at a point $(x,p,R)\in \R^n\times \R^n\times \R^{n\times n}_{\mathrm{sym}}$ if there exists a constant $H(x,p,R)\in\R$ such that 
\begin{align}\label{H deftn}
-\sigma v^{\sigma}(\cdot\,;x,p,R) \underset{\sigma\searrow 0}{\longrightarrow} H(x,p,R)\quad\text{uniformly}.
\end{align}
Further, we call $F$ ergodic if it is ergodic at every $(x,p,R)\in \R^n\times \R^n\times \R^{n\times n}_{\mathrm{sym}}$.
\item[(ii)] If $F$ is ergodic, we call the function
\begin{align*}
H:\R^n\times \R^n\times \R^{n\times n}_{\mathrm{sym}}\rightarrow\R,\qquad (x,p,R)\mapsto H(x,p,R)
\end{align*}
defined via \eqref{H deftn} the effective Hamiltonian corresponding to $F$.
\end{itemize}
\end{dfntn}

The assumptions on the coefficients made in paragraph \ref{Ass on coef} are such that the HJBI operator \eqref{HJBI operator} fits into the framework considered in \cite{ABM07}, which guarantees ergodicity. The corresponding effective Hamiltonian $H:\R^n\times \R^n\times \R^{n\times n}_{\mathrm{sym}}\rightarrow\R$ is automatically continuous and degenerate elliptic, that is, 
\begin{align*}
R_1-R_2\geq 0\quad \Longrightarrow   \quad H(x,p,R_1)\leq H(x,p,R_2)
\end{align*}
for any $x,p\in \R^n$, $R_1,R_2\in \R^{n\times n}_{\mathrm{sym}}$.
\begin{rmrk}
In the periodic homogenization of elliptic and parabolic HJBI equations 
\begin{align*}
u_{\eps}^{\mathrm{e}}+F\left(x,\frac{x}{\eps},\nabla u_{\eps}^{\mathrm{e}},\nabla^2 u_{\eps}^{\mathrm{e}}\right)=0,\qquad \partial_t u_{\eps}^{\mathrm{p}}+F\left(x,\frac{x}{\eps},\nabla_x u_{\eps}^{\mathrm{p}},\nabla^2_x u_{\eps}^{\mathrm{p}}\right)=0,
\end{align*}
posed in a suitable Dirichlet/Cauchy setting, the effective Hamiltonian determines the homogenized equation
\begin{align*}
u_0^{\mathrm{e}}+H\left(x,\nabla u_{0}^{\mathrm{e}},\nabla^2 u_{0}^{\mathrm{e}}\right)=0,\qquad \partial_t u_0^{\mathrm{p}}+H\left(x,\nabla_x u_{0}^{\mathrm{p}},\nabla^2_x u_{0}^{\mathrm{p}}\right)=0;
\end{align*}
see \cite{ABM07,Eva89,Eva92}.
\end{rmrk}

In this setting, having $A=A(y,\alpha,\beta)$ being independent of the state variable $x$, it can be shown that
\begin{align*}
\left\lvert H(x_1,p,R)-H(x_2,p,R)\right\rvert \leq C\lvert x_1-x_2\rvert (1+\lvert p\rvert)+\omega(\lvert x_1-x_2\rvert)\quad\forall x_1,x_2,p\in\R^n, \,R\in \R^{n\times n}_{\mathrm{sym}},
\end{align*}
for some constant $C>0$ and modulus of continuity $\omega$, which guarantees a comparison principle for the effective problem and implies homogenization; see \cite{ABM07}.

\subsubsection{Assumptions on the coefficients}\label{Ass on coef}

We assume that $A=\frac{1}{2}GG^{\mathrm{T}}\in C(\R^n\times\calA\times\calB; \R^{n\times n})$, $b\in  C(\R^n\times \R^n\times\calA\times\calB; \R^n)$ and $f\in  C(\R^n\times \R^n\times\calA\times\calB; \R)$ satisfy the assumptions listed below. 

\begin{itemize}
\item $G,b,f$ are bounded continuous functions on their respective domains.
\item $G=G(y,\alpha,\beta),b=b(x,y,\alpha,\beta)$ are Lipschitz continuous in $(x,y)$, uniformly in $(\alpha,\beta)$.
\item $f=f(x,y,\alpha,\beta)$ is uniformly continuous in $(x,y)$, uniformly in $(\alpha,\beta)$.
\item $G,b,f$ are $Y$-periodic in the fast variable $y$.
\item Uniform ellipticity: $\exists\,\zeta_1,\zeta_2>0:\; \zeta_1 \lvert \xi\rvert^2\leq  A^{\alpha\beta}(y)\xi \cdot \xi\leq \zeta_2 \lvert \xi\rvert^2\quad \forall y,\xi\in \R^n,\, (\alpha,\beta)\in \calA\times\calB$.
\item Cordes condition: There exist constants $\lambda>0$ and $\delta\in (0,1)$ such that \begin{align}\label{Cordes for effH}
\lvert A^{\alpha\beta}(y)\rvert^2+\frac{\lvert b^{\alpha\beta}(x,y)\rvert^2}{2\lambda} + \frac{1}{\lambda^2}\leq \frac{1}{n+\delta}\left(\mathrm{tr}( A^{\alpha\beta}(y)) + \frac{1}{\lambda}  \right)^{2} 
\end{align}
for all $(x,y,\alpha,\beta)\in \R^n\times \R^n\times \calA\times\calB$.
\end{itemize}

\subsection{Approximation of the cell $\sigma$-problem}\label{Sec:Appr of cell sigma}

For fixed $(x,p,R)\in \R^n\times \R^n\times \R^{n\times n}_{\mathrm{sym}}$ and a positive parameter $\sigma\in (0,\bar{\sigma})$ with fixed $\bar{\sigma}>0$, let us consider the cell $\sigma$-problem \eqref{cell sigma-problem} in the rewritten form
\begin{align}\label{vsigmaprob}
\left\{\begin{aligned}
\inf_{\alpha\in\calA}\sup_{\beta\in\calB}\left\{-A^{\alpha\beta}:\nabla^2 v^{\sigma}+\sigma v^{\sigma}-g^{\alpha\beta}_{x,p,R}  \right\}=0\quad\text{for }y\in Y,\\ y\mapsto v^{\sigma}(y;x,p,R)\text{ is $Y$-periodic},
\end{aligned}\right.
\end{align}
where $g_{x,p,R}^{\alpha\beta}:\R^n\rightarrow \R$ is the $Y$-periodic function given by
\begin{align*}
g_{x,p,R}^{\alpha\beta}(y):=g_{x,p,R}(y,\alpha,\beta):=A^{\alpha\beta}(y):R+b^{\alpha\beta}(x,y)\cdot p + f^{\alpha\beta}(x,y)
\end{align*}
for $y\in\R^n$ and $(\alpha,\beta)\in\calA\times\calB$. The following lemma shows that, for any $\sigma>0$, the problem \eqref{vsigmaprob} admits a unique strong solution $v^{\sigma}\in H^2_{\mathrm{per}}(Y)$ and that we have a uniform bound on $\lvert v^{\sigma}\rvert_{H^2(Y)}$.

\begin{lmm}
Assume that the assumptions of Section \ref{Ass on coef} hold and let $(x,p,R)\in \R^n\times \R^n\times \R^{n\times n}_{\mathrm{sym}}$ be fixed. Then, for any $\sigma>0$, there exists a unique periodic strong solution $v^{\sigma}\in H^2_{\mathrm{per}}(Y)$ to the cell $\sigma$-problem \eqref{vsigmaprob}. Furthermore, we have the bound
\begin{align}\label{unifm bd on H2 seminorm}
\lvert v^{\sigma}\rvert_{H^2(Y)} \leq C
\end{align}
with $C>0$ independent of $\sigma$.
\end{lmm}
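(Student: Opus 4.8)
The plan is to recognize that the cell $\sigma$-problem \eqref{vsigmaprob} is itself a periodic HJBI problem of the form \eqref{HJBI} with coefficients $A^{\alpha\beta}(y)$ (unchanged), $b^{\alpha\beta}\equiv 0$, $c^{\alpha\beta}\equiv\sigma$, and $f^{\alpha\beta}=g_{x,p,R}^{\alpha\beta}$. First I would check that this choice of coefficients satisfies all the hypotheses of Section \ref{Section 2 setting}: uniform ellipticity is inherited directly, $\inf c=\sigma>0$ holds for each fixed $\sigma>0$, and the coefficients are uniformly continuous and $Y$-periodic in $y$ by the assumptions of Section \ref{Ass on coef} (note $g_{x,p,R}^{\alpha\beta}$ is uniformly continuous in $y$ for fixed $(x,p,R)$ since $A,b,f$ are). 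The Cordes condition for this renormalized-coefficient problem must be verified with the parameter $\lambda$ replaced by $\sigma$ (so that $c/\lambda$ becomes $\sigma/\sigma=1$); here one uses \eqref{Cordes for effH}, but notes that \eqref{Cordes for effH} is stated with the fixed $\lambda$, not with $\sigma$. The resolution is that \eqref{Cordes for effH} with the fixed $\lambda$ implies the Cordes condition with parameter $\sigma$ as well — or, more cleanly, one rescales: the equation $\inf\sup\{-A:\nabla^2 v+\sigma v-g\}=0$ can be multiplied through so that one works with Cordes parameter $\lambda$ rather than $\sigma$ by instead analyzing the operator directly. I expect the cleanest route is to apply Theorem \ref{thm well-pos} after checking Cordes holds with some valid parameter pair; existence and uniqueness of $v^\sigma\in H^2_{\mathrm{per}}(Y)$ then follow immediately.

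For the uniform bound \eqref{unifm bd on H2 seminorm}, the plan is to mimic Remark \ref{Rk: bound on soln}, but being careful that the bound there is stated in the $\|L_\lambda\cdot\|_{L^2}$-norm with a $\lambda$ that here would naturally be $\sigma$, which degenerates as $\sigma\searrow 0$. So the naive application of Remark \ref{Rk: bound on soln} gives a bound on $\||\nabla^2 v^\sigma|^2+2\sigma|\nabla v^\sigma|^2+\sigma^2 (v^\sigma)^2\|^{1/2}$ in terms of $\|F_\gamma[0]\|_{L^2}/(1-\sqrt{1-\delta})$, and the constant $\|F_\gamma[0]\|_{L^2}$ involves $\gamma$ (which depends on $\sigma$ through the $c/\lambda$ term) and $g_{x,p,R}$ (which is $\sigma$-independent). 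The key observation is that $F_\gamma[0]=\inf_\alpha\sup_\beta\{-\gamma^{\alpha\beta}g_{x,p,R}^{\alpha\beta}\}$ and $\gamma$ remains bounded uniformly (above and below) as $\sigma\searrow 0$, because the $c^2/\lambda^2$ term becomes $1$ and the $c/\lambda$ term becomes $1$; so $\|F_\gamma[0]\|_{L^2}$ is bounded uniformly in $\sigma\in(0,\bar\sigma)$. Hence the quantity $\|\nabla^2 v^\sigma\|_{L^2(Y)}=|v^\sigma|_{H^2(Y)}$ is bounded by this same uniform constant (dropping the nonnegative $2\sigma\|\nabla v^\sigma\|^2+\sigma^2\|v^\sigma\|^2$ terms from the left side), which is exactly \eqref{unifm bd on H2 seminorm}.

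The main obstacle, and the step requiring the most care, is the bookkeeping around which Cordes parameter is in play and ensuring the monotonicity/coercivity constant $1-\sqrt{1-\delta}$ does not degenerate as $\sigma\searrow 0$. The clean way to handle this is to observe that the Cordes condition \eqref{Cordes for effH} — stated with the \emph{fixed} $\lambda$ and with the constant term $1/\lambda^2$ rather than $\sigma/\lambda$ or $1/\sigma^2$ — is designed precisely so that the operator $\inf\sup\{-A:\nabla^2 v + \sigma v - g\}$ admits a uniform estimate. One rewrites the problem, multiplying through by $\gamma^{\alpha\beta}$ defined with the fixed $\lambda$ (as in \eqref{gamma first def} but with $b\equiv 0$, $c\equiv\lambda$ — no, here one keeps $c^{\alpha\beta}$ as is), and then Lemma \ref{Lmm: consequence of Cordes} gives the contraction estimate with $\delta$ independent of $\sigma$. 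I would state this carefully: define $\gamma^{\alpha\beta}:=(|A^{\alpha\beta}|^2+\tfrac{1}{\lambda^2})^{-1}(\mathrm{tr}(A^{\alpha\beta})+\tfrac{1}{\lambda})$ and test against $L_\lambda v:=\lambda v-\Delta v$ with the \emph{fixed} $\lambda$; the term $\sigma v^\sigma$ contributes a sign-definite quantity when paired with $\lambda v^\sigma - \Delta v^\sigma$ (namely $\sigma\lambda\|v^\sigma\|^2 + \sigma\|\nabla v^\sigma\|^2\geq 0$ after integration by parts), so it only helps coercivity. This is the crux: once framed this way, both existence/uniqueness and the $\sigma$-uniform $H^2$-seminorm bound follow from estimates with $\delta,\lambda$ fixed, and I would then simply cite Theorem \ref{thm well-pos} and the argument of Remark \ref{Rk: bound on soln} with the adapted, $\sigma$-independent right-hand side.
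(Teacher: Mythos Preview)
Your strategy matches the paper's: apply Theorem~\ref{thm well-pos} for well-posedness and Remark~\ref{Rk: bound on soln} for the bound, with the key observation that the renormalization factor $\gamma$ and hence $\|F_\gamma[0]\|_{L^2}$ are $\sigma$-independent, so that $|v^\sigma|_{H^2(Y)}$ is bounded uniformly after dropping the lower-order nonnegative terms. The one piece you do not pin down is the choice of Cordes parameter. Your first attempt---replacing $\lambda$ by $\sigma$---yields the condition $|A|^2+1\le\tfrac{1}{n+\delta}(\mathrm{tr}(A)+1)^2$, which is \emph{not} what~\eqref{Cordes for effH} asserts unless $\lambda=1$. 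The paper instead takes $\lambda_\sigma:=\sigma\lambda$: then $c/\lambda_\sigma=\sigma/(\sigma\lambda)=1/\lambda$, the Cordes condition for the cell $\sigma$-problem becomes exactly~\eqref{Cordes for effH} with $b=0$ (which is implied by~\eqref{Cordes for effH}), and the renormalization factor computes to $\gamma^\sigma=(\mathrm{tr}(A)+1/\lambda)/(|A|^2+1/\lambda^2)$, manifestly $\sigma$-independent. Remark~\ref{Rk: bound on soln} then gives the uniform bound immediately. Your ``rescaling'' remark is exactly this observation, just not carried out.

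Your final-paragraph alternative---keeping $\lambda$ fixed and treating $\sigma v^\sigma$ as a separate sign-definite contribution when tested against $L_\lambda v^\sigma$---has a gap as written: after multiplying by $\gamma^{\alpha\beta}$ inside the $\inf\sup$, the zeroth-order term becomes $\gamma^{\alpha\beta}\sigma v^\sigma$, which depends on $(\alpha,\beta)$ and cannot be extracted as a single term $\int_Y\sigma v^\sigma\,L_\lambda v^\sigma$. One can salvage this by rescaling $w:=\sigma v^\sigma$ (so the new problem has $c\equiv 1$ and satisfies Cordes with the fixed $\lambda$), but that is again equivalent to the paper's choice $\lambda_\sigma=\sigma\lambda$.
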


\begin{proof}
It is straightforward to check that all assumptions of Theorem \ref{thm well-pos} are satisfied. In particular, the problem \eqref{vsigmaprob} satisfies the Cordes condition
\begin{align*}
\lvert A\rvert^2+ \frac{\sigma^2}{\lambda_{\sigma}^2}\leq \frac{1}{n+\delta} \left(\mathrm{tr}( A) + \frac{\sigma}{\lambda_{\sigma}}  \right)^{2}  \quad\text{in } \R^n\times \calA\times\calB,
\end{align*}
where $\lambda_{\sigma}>0$ is defined by $\lambda_{\sigma}:=\sigma \lambda$. Therefore, we find that there exists a unique periodic strong solution $v^{\sigma}\in H^2_{\mathrm{per}}(Y)$ to \eqref{vsigmaprob}. Note that the corresponding renormalization function $\gamma^{\sigma}\in C(\R^n\times\calA\times\calB)$ (see \eqref{gamma first def}) is given by
\begin{align*}
\gamma^{\sigma}:=\frac{\mathrm{tr}( A) + \frac{\sigma}{\lambda_{\sigma}}}{\lvert A\rvert^2+ \frac{\sigma^2}{\lambda_{\sigma}^2}} = \frac{\mathrm{tr}( A) + \frac{1}{\lambda}}{\lvert A\rvert^2+ \frac{1}{\lambda^2}}
\end{align*}
and hence, $\gamma:=\gamma^{\sigma}$ is independent of $\sigma$. The uniform bound \eqref{unifm bd on H2 seminorm} now follows from Remark \ref{Rk: bound on soln}.
\end{proof}

The discontinuous Galerkin ($s=0$) or the $C^0$-IP ($s=1$) finite element method from Section \ref{Section FE schemes for periodic HJBI} yields an approximation $v_{\calT}^{\sigma}\in V^s_{\calT}$ to the problem \eqref{vsigmaprob} satisfying
\begin{align}\label{first ine}
\|v^{\sigma}-v^{\sigma}_{\calT}\|_{\calT,\lambda_{\sigma}}\leq C \inf_{z_{\calT}\in V^s_{\calT}}\|v^{\sigma}-z_{\calT}\|_{\calT,\lambda_{\sigma}}\leq C \inf_{z_{\calT}\in V^s_{\calT}}\|v^{\sigma}-z_{\calT}\|_{\calT,\bar{\sigma}\lambda},
\end{align}
where the constant $C>0$ can be chosen to be independent of $\sigma$; see Section \ref{Sec a priori analysis}.

\begin{lmm}[Approximation of the cell $\sigma$-problem]\label{Thm: Appr of appr corr}
Assume that the assumptions of Section \ref{Ass on coef} hold, and that the periodic strong solution $v^{\sigma}=v^{\sigma}(\cdot\,;x,p,R)\in H^2_{\mathrm{per}}(Y)$ to \eqref{vsigmaprob} satisfies $v^{\sigma}\in H^{2+r_K}(K)$ with $r_K\geq 0$ for all $K\in \calT$. Then, we have the error bound 
\begin{align*}
\|v^{\sigma}-v^{\sigma}_{\calT}\|_{\calT,\lambda_{\sigma}}\lesssim \inf_{z_{\calT}\in V^s_{\calT}}\|v^{\sigma}-z_{\calT}\|_{\calT,\bar{\sigma}\lambda} \lesssim \sqrt{ \sum_{K\in \calT} h_K^{2\min\{r_K,\bar{p}-1\}}\|\nabla v^{\sigma}\|_{H^{1+r_K}(K)}^2}
\end{align*}
with constants independent of $\sigma$ and the choice of $(x,p,R)$.
\end{lmm}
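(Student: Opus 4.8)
The plan is to treat the two inequalities separately; the first is essentially free. The bound
$\|v^{\sigma}-v^{\sigma}_{\calT}\|_{\calT,\lambda_{\sigma}}\lesssim \inf_{z_{\calT}\in V^s_{\calT}}\|v^{\sigma}-z_{\calT}\|_{\calT,\bar{\sigma}\lambda}$
is nothing but \eqref{first ine}, whose constant was already noted to be $\sigma$-independent (it stems from the near-best approximation bound of Theorem~\ref{Tm: a priori bound} applied to the cell $\sigma$-problem, all of whose scheme constants are $\sigma$-independent since the renormalization function $\gamma$ is). So only the second inequality, a purely approximation-theoretic statement, requires work, and for it it suffices to produce a single admissible competitor realizing the claimed bound.

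Since $n\in\{2,3\}$, the solution $v^{\sigma}$ lies in $H^2_{\mathrm{per}}(Y)\hookrightarrow C(\bar Y)$ and, on each element, in $H^{2+r_K}(K)\hookrightarrow C(\bar K)$, so point values are well defined; I would take $z_{\calT}:=I_{\calT}v^{\sigma}$, the degree-$\bar p$ nodal Lagrange interpolant of $v^{\sigma}$ with respect to the node set $\calZ$ of $\calT$. As the Lagrange interpolant of a continuous function on a conforming simplicial mesh is globally continuous, and as the nodes on $\partial Y$ are periodically identified while $v^{\sigma}$ is $Y$-periodic, one gets $z_{\calT}\in V^0_{\calT}\cap C(\bar Y)\cap\{Y\text{-periodic}\}=V^1_{\calT}\subseteq V^s_{\calT}$ for both $s\in\{0,1\}$, so $z_{\calT}$ is a legitimate competitor in either space. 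Consequently $\llbracket v^{\sigma}-z_{\calT}\rrbracket=0$ on every face and, since $v^{\sigma}\in H^2_{\mathrm{per}}(Y)$, also $\llbracket\nabla v^{\sigma}\rrbracket=0$, so only $\llbracket\nabla z_{\calT}\rrbracket=-\llbracket\nabla(v^{\sigma}-z_{\calT})\rrbracket$ enters the jump seminorm.

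The remainder is standard local finite element approximation theory. With $t_K:=\min\{2+r_K,\bar p+1\}\ge 2>n/2$, and using that $(I_{\calT}v^{\sigma})|_K$ depends only on $v^{\sigma}|_K$ and reproduces $\mathbb{P}_{\bar p}$ on $K$, the Bramble--Hilbert lemma yields, for $m\in\{0,1,2\}$ and every $K\in\calT$,
\begin{align*}
\|\nabla^m(v^{\sigma}-z_{\calT})\|_{L^2(K)}\lesssim h_K^{t_K-m}\,|v^{\sigma}|_{H^{t_K}(K)} = h_K^{(2-m)+\min\{r_K,\bar p-1\}}\,|v^{\sigma}|_{H^{t_K}(K)} \lesssim h_K^{(2-m)+\min\{r_K,\bar p-1\}}\,\|\nabla v^{\sigma}\|_{H^{1+r_K}(K)}
\end{align*}
with constant depending only on $n,\theta_{\calT},\bar p$. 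For the jump seminorm I would combine the multiplicative trace inequality $\|\tau_{\partial K}w\|_{L^2(\partial K)}^2\lesssim h_K^{-1}\|w\|_{L^2(K)}^2+h_K\|\nabla w\|_{L^2(K)}^2$ applied to $w=\nabla(v^{\sigma}-z_{\calT})\in H^1(K;\R^n)$, together with $h_F\simeq h_K$ and the identity for the jump above, to get
\begin{align*}
\int_{\calF} h_{\calT}^{-1}\lvert\llbracket\nabla(v^{\sigma}-z_{\calT})\rrbracket\rvert^2 \lesssim \sum_{K\in\calT}\Big(h_K^{-2}\|\nabla(v^{\sigma}-z_{\calT})\|_{L^2(K)}^2 + \|\nabla^2(v^{\sigma}-z_{\calT})\|_{L^2(K)}^2\Big),\qquad \int_{\calF} h_{\calT}^{-3}\lvert\llbracket v^{\sigma}-z_{\calT}\rrbracket\rvert^2 = 0.
\end{align*}

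Inserting these estimates into the definition of $\|v^{\sigma}-z_{\calT}\|_{\calT,\bar{\sigma}\lambda}^2$, absorbing the fixed factors $2\bar{\sigma}\lambda$ and $\bar{\sigma}^2\lambda^2$, and using $h_K\le 1$ to discard the surplus positive powers of $h_K$ in the gradient and $L^2$ terms (the Hessian and jump contributions already being of the sharp order $h_K^{2\min\{r_K,\bar p-1\}}$), one arrives at
\begin{align*}
\|v^{\sigma}-z_{\calT}\|_{\calT,\bar{\sigma}\lambda}^2 \lesssim \sum_{K\in\calT} h_K^{2\min\{r_K,\bar p-1\}}\,\|\nabla v^{\sigma}\|_{H^{1+r_K}(K)}^2,
\end{align*}
which, together with \eqref{first ine}, is the assertion. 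All constants are $\sigma$-independent — the one in \eqref{first ine} by Section~\ref{Sec a priori analysis}, the interpolation and trace constants by construction — and independent of $(x,p,R)$, which enters only through $v^{\sigma}$, already present on the right-hand side. No genuine analytical obstacle arises; the only points requiring care are that a single interpolant be admissible in both $V^0_{\calT}$ and $V^1_{\calT}$, that the jump terms be controlled purely through traces of $v^{\sigma}-z_{\calT}$ (exploiting that $v^{\sigma}$ has no jumps), and that the various powers of $h_K$ collapse to the single exponent $\min\{r_K,\bar p-1\}$ dictated by the Hessian term.
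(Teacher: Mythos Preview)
Your proof is correct and aligns with the paper's approach: the paper omits the proof entirely, merely noting that the first inequality is \eqref{first ine} and that the second follows from standard approximation arguments. You have supplied precisely those standard arguments (Lagrange interpolation, Bramble--Hilbert, trace inequality), with appropriate care that the interpolant lies in $V^1_{\calT}\subseteq V^s_{\calT}$ and that all constants are independent of $\sigma$ and $(x,p,R)$.
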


The proof is omitted as the first inequality was already obtained in \eqref{first ine}, while the second estimate is a consequence of standard approximation arguments. 

Let us observe that without any additional regularity assumptions on $v^{\sigma}$, we have that $\|\nabla v^{\sigma}\|_{H^1(Y)}\leq C$ is uniformly bounded in $\sigma$. Indeed, this follows from \eqref{unifm bd on H2 seminorm} and Poincar\'{e}'s inequality.

\subsection{Approximation of the effective Hamiltonian}\label{Sec3.3}

Let us note that the effective Hamiltonian $H:\R^n\times \R^n\times \R^{n\times n}_{\mathrm{sym}}\rightarrow\R$ given by \eqref{H deftn} is defined via the viscosity solutions $v^{\sigma}\in C(\R^n)$ to the cell $\sigma$-problem \eqref{cell sigma-problem}. Therefore, we make the technical assumption that 
\begin{align}\label{tech ass}
v^{\sigma}\in W^{2,n}_{\mathrm{loc}}(\R^n),
\end{align}
so that the strong solution coincides with the unique viscosity solution to \eqref{vsigmaprob}; see \cite{CCK96,Lii83,Lio83}. This is no further restriction when $n=2$ or when we have an HJB problem; see \cite{GSS21}.

Let us define the approximate effective Hamiltonian $H_{\calT}^{\sigma}$ for $\sigma>0$ via
\begin{align}\label{H_T^s defn}
H_{\calT}^{\sigma}:\R^n\times \R^n\times \R^{n\times n}_{\mathrm{sym}}\rightarrow\R,\qquad H_{\calT}^{\sigma}(x,p,R):=-\sigma \int_Y v_{\calT}^{\sigma}(\cdot\,;x,p,R).
\end{align}
We note that this definition is quite natural as we have from \eqref{H deftn} that
\begin{align*}
Q_{x,p,R}^{\sigma}:=\left\|-\sigma v^{\sigma}(\cdot\,;x,p,R)-H(x,p,R)\right\|_{L^{\infty}(Y)}\underset{\sigma\searrow 0}{\longrightarrow} 0
\end{align*}
for any $(x,p,R)\in \R^n\times \R^n\times \R^{n\times n}_{\mathrm{sym}}$.

\begin{thrm}[Approximation of the effective Hamiltonian]\label{Thm: Appr of H}
Assume that the assumptions of Section \ref{Ass on coef} and \eqref{tech ass} hold. Let $H:\R^n\times \R^n\times \R^{n\times n}_{\mathrm{sym}}\rightarrow\R$ denote the effective Hamiltonian given by \eqref{H deftn} and $H_{\calT}^{\sigma}:\R^n\times \R^n\times \R^{n\times n}_{\mathrm{sym}}\rightarrow\R$ its numerical approximation \eqref{H_T^s defn}. Then, for $\sigma\in (0,\bar{\sigma})$ and $(x,p,R)\in \R^n\times \R^n\times \R^{n\times n}_{\mathrm{sym}}$, we have the error bound
\begin{align}\label{Appr of H}
\lvert H_{\calT}^{\sigma}(x,p,R)-H(x,p,R)\rvert \lesssim Q_{x,p,R}^{\sigma}+\inf_{z_{\calT}\in V^s_{\calT}}\|v^{\sigma}(\cdot\,;x,p,R)-z_{\calT}\|_{\calT,\bar{\sigma}\lambda}.
\end{align}
In particular, we have the following assertions.
\begin{itemize}
\item[(i)] If there exist $\{r_K\}_{K\in\calT}\subset [0,\infty)$ such that $\sup_{K\in\calT} \|\nabla v^{\sigma}(\cdot\,;x,p,R)\|_{H^{1+r_K}(K)}\leq C_{x,p,R} \lvert K\rvert^{\frac{1}{2}}$ holds uniformly in $\sigma$, then we have that
\begin{align}\label{Appr of H2}
\lvert H_{\calT}^{\sigma}(x,p,R)-H(x,p,R)\rvert \lesssim Q_{x,p,R}^{\sigma}+C_{x,p,R}\sqrt{\sum_{K\in \calT} h_K^{2\min\{r_K,\bar{p}-1\}} \lvert K\rvert }.
\end{align}
\item[(ii)] If there exists $r\geq 0$ such that $\|\nabla v^{\sigma}(\cdot\,;x,p,R)\|_{H^{1+r}(Y)}\leq C_{x,p,R}$ holds uniformly in $\sigma$, then we have that
\begin{align}\label{Appr of H3}
\lvert H_{\calT}^{\sigma}(x,p,R)-H(x,p,R)\rvert \lesssim Q_{x,p,R}^{\sigma}+C_{x,p,R}\,h^{\min\{r,\bar{p}-1\}},
\end{align}
where we write $h:=\max_{K\in\calT}h_K$.
\end{itemize}
The constants absorbed in $\lesssim$ in the above estimates \eqref{Appr of H}, \eqref{Appr of H2} and \eqref{Appr of H3} are independent of $\sigma$ and $(x,p,R)$.
\end{thrm}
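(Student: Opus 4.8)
The plan is to estimate the pointwise difference $\lvert H_{\calT}^{\sigma}(x,p,R)-H(x,p,R)\rvert$ by splitting it via the triangle inequality against the intermediate quantity $-\sigma\int_Y v^{\sigma}(\cdot\,;x,p,R)$, which is the average over $Y$ of the exact (negative) approximate corrector. First I would write
\begin{align*}
\lvert H_{\calT}^{\sigma}(x,p,R)-H(x,p,R)\rvert \leq \left\lvert \sigma \int_Y \big(v^{\sigma}-v^{\sigma}_{\calT}\big)\right\rvert + \left\lvert \sigma \int_Y v^{\sigma} + H(x,p,R)\right\rvert,
\end{align*}
and observe that the second term is bounded by $\left\lVert -\sigma v^{\sigma} - H(x,p,R)\right\rVert_{L^{\infty}(Y)} = Q_{x,p,R}^{\sigma}$ by Jensen's inequality, since $Y$ has unit measure. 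This disposes of the ``ergodicity error'' term immediately from the definition \eqref{H deftn}.

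For the first term, the idea is to bound $\left\lvert \sigma \int_Y (v^{\sigma}-v^{\sigma}_{\calT})\right\rvert$ by a norm of the discretization error that is controlled by the a priori analysis. By the Cauchy--Schwarz inequality and $\lvert Y\rvert=1$ we have $\left\lvert \sigma\int_Y (v^{\sigma}-v^{\sigma}_{\calT})\right\rvert \leq \sigma \lVert v^{\sigma}-v^{\sigma}_{\calT}\rVert_{L^2(Y)}$. Now the crucial point is the scaling of the norm $\lVert\cdot\rVert_{\calT,\lambda_{\sigma}}$ with $\lambda_{\sigma}=\sigma\lambda$: by definition $\lVert w\rVert_{\calT,\lambda_{\sigma}}^2 \geq \lambda_{\sigma}^2 \lVert w\rVert_{L^2(Y)}^2 = \sigma^2\lambda^2\lVert w\rVert_{L^2(Y)}^2$ for $w\in V^s_{\calT}$, and the same lower bound (without the jump terms, but those are nonnegative) holds for $w=v^{\sigma}-v^{\sigma}_{\calT}$ after noting $v^{\sigma}\in H^2_{\mathrm{per}}(Y)$ so $\lVert v^{\sigma}-v^{\sigma}_{\calT}\rVert_{\calT,\lambda_{\sigma}}^2 \geq \lambda_{\sigma}^2\lVert v^{\sigma}-v^{\sigma}_{\calT}\rVert_{L^2(Y)}^2$. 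Hence $\sigma\lVert v^{\sigma}-v^{\sigma}_{\calT}\rVert_{L^2(Y)} \leq \tfrac{1}{\lambda}\lVert v^{\sigma}-v^{\sigma}_{\calT}\rVert_{\calT,\lambda_{\sigma}}$, and the latter is exactly the quantity estimated in \eqref{first ine} and Lemma \ref{Thm: Appr of appr corr}, giving $\lesssim \inf_{z_{\calT}\in V^s_{\calT}}\lVert v^{\sigma}-z_{\calT}\rVert_{\calT,\bar{\sigma}\lambda}$ with a constant independent of $\sigma$ and of $(x,p,R)$. This proves \eqref{Appr of H}.

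For assertions (i) and (ii), I would simply feed the regularity hypotheses into the second inequality of Lemma \ref{Thm: Appr of appr corr}. Under (i), the bound $\sup_{K\in\calT}\lVert\nabla v^{\sigma}\rVert_{H^{1+r_K}(K)}\leq C_{x,p,R}\lvert K\rvert^{1/2}$ plugged into $\sqrt{\sum_{K\in\calT} h_K^{2\min\{r_K,\bar{p}-1\}}\lVert\nabla v^{\sigma}\rVert_{H^{1+r_K}(K)}^2}$ yields $C_{x,p,R}\sqrt{\sum_{K\in\calT} h_K^{2\min\{r_K,\bar{p}-1\}}\lvert K\rvert}$, which is \eqref{Appr of H2}; under (ii), with uniform $r_K=r$ and the global bound $\lVert\nabla v^{\sigma}\rVert_{H^{1+r}(Y)}\leq C_{x,p,R}$, one factors out $h^{\min\{r,\bar{p}-1\}}$ and uses $\sum_{K\in\calT}\lvert K\rvert = \lvert Y\rvert = 1$ to get \eqref{Appr of H3}. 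The only subtlety to be careful about is that all constants appearing in \eqref{first ine} and Lemma \ref{Thm: Appr of appr corr} are genuinely independent of $\sigma$ — this is precisely what the Cordes-parameter bookkeeping in the lemma preceding \eqref{first ine} (the observation that the renormalization function $\gamma^{\sigma}=\gamma$ is $\sigma$-independent) was set up to guarantee — so there is no real obstacle here; the proof is essentially an assembly of already-established facts, with the norm-scaling identity $\lambda_{\sigma}=\sigma\lambda$ being the one mildly nonobvious ingredient.
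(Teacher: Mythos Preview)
Your proposal is correct and follows essentially the same approach as the paper's proof: both split the error into an ergodicity term bounded by $Q_{x,p,R}^{\sigma}$ and a discretization term $\sigma\|v^{\sigma}-v^{\sigma}_{\calT}\|_{L^2(Y)}$, and both exploit the scaling $\lambda_{\sigma}=\sigma\lambda$ in the $\|\cdot\|_{\calT,\lambda_{\sigma}}$-norm to absorb the factor $\sigma$ before invoking the near-best approximation bound \eqref{first ine}. The only cosmetic difference is the order in which the triangle and H\"{o}lder inequalities are applied; for part (ii) note that the relevant identity is $\sum_{K\in\calT}\|\nabla v^{\sigma}\|_{H^{1+r}(K)}^2=\|\nabla v^{\sigma}\|_{H^{1+r}(Y)}^2$ rather than $\sum_{K}\lvert K\rvert=1$, but this does not affect the argument.
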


\begin{proof}
Let $\sigma\in (0,\bar{\sigma})$ and $(x,p,R)\in \R^n\times \R^n\times \R^{n\times n}_{\mathrm{sym}}$. We observe that by Lemma \ref{Thm: Appr of appr corr}, and recalling $\lambda_{\sigma}=\sigma\lambda$, we have
\begin{align}\label{for tri 1}
\begin{split}
\left\|\sigma v^{\sigma}(\cdot\,;x,p,R)-\sigma v^{\sigma}_{\calT}(\cdot\,;x,p,R)\right\|_{L^2(Y)} &\lesssim \|v^{\sigma}(\cdot\,;x,p,R)-v^{\sigma}_{\calT}(\cdot\,;x,p,R)\|_{\calT,\lambda_{\sigma}}\\ &\lesssim \inf_{z_{\calT}\in V^s_{\calT}}\|v^{\sigma}(\cdot\,;x,p,R)-z_{\calT}\|_{\calT,\bar{\sigma}\lambda}
\end{split}
\end{align}
with constants independent of $\sigma$ and $(x,p,R)$. Further, we note that
\begin{align}\label{for tri 2}
\|-\sigma v^{\sigma}(\cdot\,;x,p,R) - H(x,p,R)\|_{L^2(Y)}\leq Q_{x,p,R}.
\end{align}
We can now conclude, using H\"{o}lder and triangle inequalities together with \eqref{for tri 1} and \eqref{for tri 2}, that we have
\begin{align*}
\lvert H_{\calT}^{\sigma}(x,p,R)-H(x,p,R)\rvert &=\left\lvert  -\sigma\int_Y   v_{\calT}^{\sigma}(\cdot\,;x,p,R)- H(x,p,R)  \right\rvert \\&= \left\lvert  \int_Y \left(-\sigma v_{\calT}^{\sigma}(\cdot\,;x,p,R) - H(x,p,R)\right)  \right\rvert \\ &\leq \|-\sigma v_{\calT}^{\sigma}(\cdot\,;x,p,R) - H(x,p,R)\|_{L^2(Y)}\\ &\lesssim Q_{x,p,R}+\inf_{z_{\calT}\in V^s_{\calT}}\|v^{\sigma}(\cdot\,;x,p,R)-z_{\calT}\|_{\calT,\bar{\sigma}\lambda},
\end{align*}
where the constant absorbed in $\lesssim$ is independent of $\sigma$ and $(x,p,R)$. This completes the proof of \eqref{Appr of H}. The assertions (i) and (ii) are immediate consequences of \eqref{Appr of H} in view of Lemma \ref{Thm: Appr of appr corr}.
\end{proof}

\begin{rmrk}[Improvement for HJB operators]\label{Rk:Improvement for HJB}
Let us assume that the coefficients $A,b,f$ from the HJBI operator \eqref{HJBI operator} are such that the operator simplifies to an HJB operator
\begin{align*}
F(x,y,p,R):=\sup_{\beta\in\calB}\left\{-A(y,\beta):R-b(x,y,\beta)\cdot p-f(x,y,\beta)  \right\}
\end{align*}
with $f$ satisfying the same assumptions as the components of $b$. We then have for $\sigma\in (0,\bar{\sigma})$ with $\bar{\sigma}$ sufficiently small and $(x,p,R)\in \R^n\times \R^n\times \R^{n\times n}_{\mathrm{sym}}$ that $Q_{x,p,R}^{\sigma}\leq C \sigma\left(1+\lvert p\rvert +\lvert R\rvert \right)$ and $\|\nabla v^{\sigma}\|_{H^{1+r}(Y)}\leq C(1+\lvert p\rvert +\lvert R\rvert)$, uniformly in $\sigma$, for some $r>0$; see \cite{CM09,GSS21}. Therefore, by Theorem \ref{Thm: Appr of H} (ii), we have the error bound
\begin{align*}
\lvert H_{\calT}^{\sigma}(x,p,R)-H(x,p,R)\rvert \lesssim \left(\sigma+h^{\min\{r,\bar{p}-1\}}\right)\left(1+\lvert p\rvert +\lvert R\rvert \right),
\end{align*}
where the constant absorbed in $\lesssim$ is independent of $\sigma$ and $(x,p,R)$.
\end{rmrk}

\section{Numerical Experiments}\label{Chap4}

\subsection{Numerical solution of a periodic HJBI problem}\label{Sec4.1}

In this numerical experiment, we consider the periodic HJBI problem
\begin{align}\label{Experiment 1 problem}
\left\{\begin{aligned}
\inf_{\alpha\in [0,\frac{1}{2}]}\sup_{\beta\in [0,2\pi]}\left\{ -A^{\alpha\beta}:\nabla^2 u  + c^{\alpha\beta} u -f^{\alpha\beta}\right\} = 0 \quad \text{in }Y,\\ u \text{ is $Y$-periodic},
\end{aligned}\right.
\end{align}
where we define the diffusion coefficient by
\begin{align*}
A^{\alpha\beta}:=Q(\beta)\begin{pmatrix}
\frac{\cos(\alpha)+\sin(\alpha)}{\sqrt{2}} & 0\\0 & \frac{\cos(\alpha)-\sin(\alpha)}{\sqrt{2}}
\end{pmatrix}Q(\beta)^{\mathrm{T}},\qquad Q(\beta):=\begin{pmatrix}
\cos(\beta) & -\sin(\beta)\\ \sin(\beta) & \cos(\beta)
\end{pmatrix},
\end{align*}
and set $c^{\alpha\beta}:=\frac{\sec(\alpha)}{\sqrt{2}}$ and $f^{\alpha\beta}:=\frac{\sec(\alpha)}{\sqrt{2}}\tilde{f}$  for $(\alpha,\beta)\in [0,\frac{1}{2}]\times [0,2\pi]$. Here, we choose $\tilde{f}\in C_{\mathrm{per}}(Y)$ such that the solution to \eqref{Experiment 1 problem} is given by
\begin{align*}
u:[0,1]^2\rightarrow\R,\qquad u(y_1,y_2)=\cos(2\pi y_1)\cos(2\pi y_2).
\end{align*}
We leave it to the reader to check that this problem fits into the setting of Section \ref{Section 2 setting}. In particular, we have that the Cordes condition \eqref{Cordes periodic} holds with $\lambda=1$.

\begin{rmrk}
The renormalized HJBI problem \eqref{HJBI renormalized} corresponding to \eqref{Experiment 1 problem} is given by
\begin{align*}
\left\{\begin{aligned}
\inf_{\alpha\in [0,\frac{1}{2}]}\sup_{\beta\in [0,2\pi]}\left\{ -\gamma^{\alpha\beta} A^{\alpha\beta}:\nabla^2 u  +  u \right\} = \tilde{f} \quad \text{in }Y,\\ u \text{ is $Y$-periodic},
\end{aligned}\right.
\end{align*}
where $\gamma^{\alpha\beta}:=\sqrt{2}\cos(\alpha)$ for $(\alpha,\beta)\in [0,\frac{1}{2}]\times [0,2\pi]$. 
\end{rmrk}

We apply the $C^0$-IP and discontinuous Galerkin finite element schemes from Section \ref{Sec family of num schemes} to the HJBI problem \eqref{Experiment 1 problem}. Under uniform mesh-refinement, we illustrate the behavior of the error 
\begin{align}\label{T error}
\|u-u_{\calT}\|_{\calT}:=\sqrt{
\int_Y \left(\lvert \nabla^2 (u-u_{\calT})\rvert^2 +2\lvert \nabla (u-u_{\calT})\rvert^2+  (u-u_{\calT})^2\right) + \lvert u-u_{\calT}\rvert_{J,\calT}^2}
\end{align} 
and of the \textit{a posteriori} error estimator (see Theorem \ref{thm: a posteriori}), i.e., 
\begin{align}\label{eta error}
\eta_{\calT}(u_{\calT}):= \sqrt{\int_Y \lvert F_{\gamma}[u_{\calT}]\rvert^2 + \lvert u_{\calT}\rvert_{J,\calT}^2}
\end{align}
for the numerical approximation $u_{\calT}\in V^s_{\calT}$. For the implementation, we have used the software package NGSolve \cite{Sch14} and the discrete nonlinear problems are solved using a Howard-type algorithm as in \cite{KS21}. Figure \ref{fig:CG and DG} presents the performance of the $C^0$ interior penalty and discontinuous Galerkin finite element methods using polynomial degrees $\bar{p}\in\{2,3\}$ and parameters $\theta\in\{0,\frac{1}{2}\}$. We observe optimal rates of convergence for both schemes, that is, order $\calO(N^{-\frac{1}{2}})$ for $\bar{p}=2$ and order $\calO(N^{-1})$ for $\bar{p}=3$, where we denote the number of degrees of freedom by $N$.

\begin{figure}
\centering
\begin{subfigure}{.5\textwidth}
  \centering
  \includegraphics[width=\linewidth]{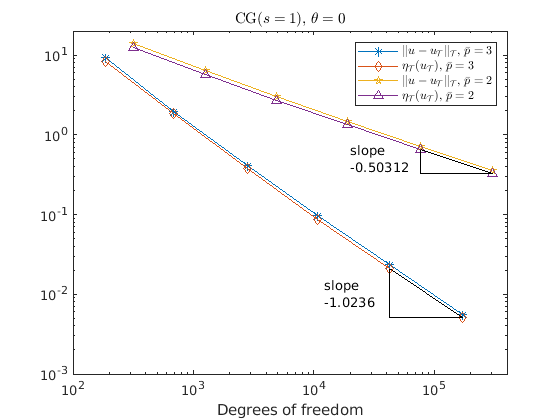}
\end{subfigure}%
\begin{subfigure}{.5\textwidth}
  \centering
  \includegraphics[width=\linewidth]{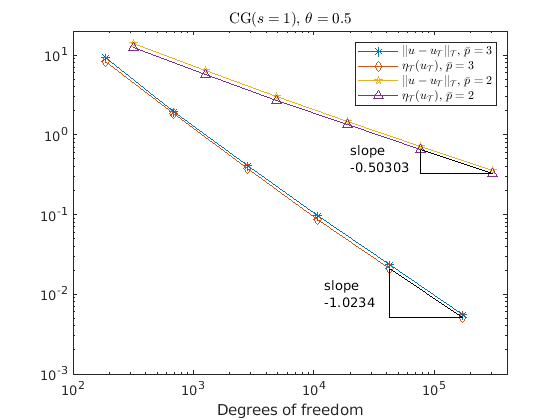}
\end{subfigure}\\
\begin{subfigure}{.5\textwidth}
  \centering
  \includegraphics[width=\linewidth]{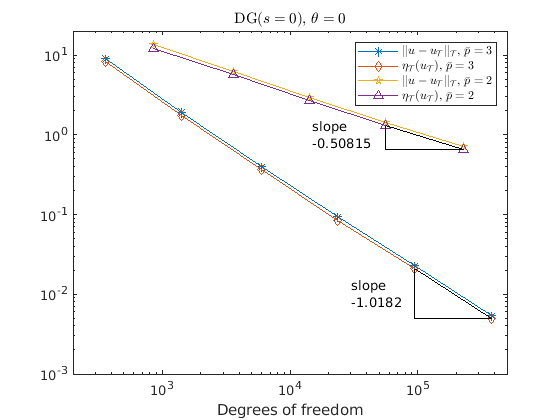}
\end{subfigure}%
\begin{subfigure}{.5\textwidth}
  \centering
  \includegraphics[width=\linewidth]{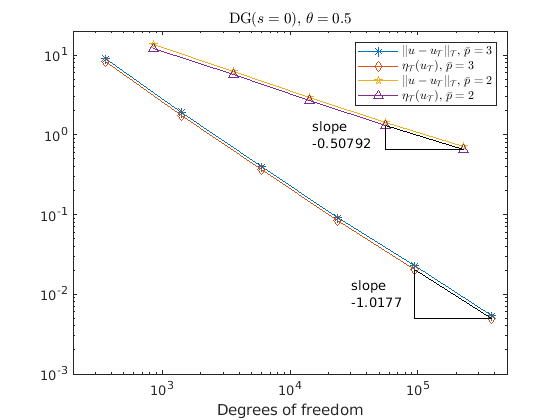}
\end{subfigure}
\caption{Approximation of the solution $u$ to the HJBI problem \eqref{Experiment 1 problem} via the $C^0$-IP (top) and DG (bottom) schemes under mesh-refinement with polynomial degrees $\bar{p}\in\{2,3\}$. We illustrate the error \eqref{T error} and the \textit{a posteriori} estimator \eqref{eta error} for the approximation $u_{\calT}\in V^s_{\calT}$ to the solution $u$, using $\theta=0$ (left) and $\theta=\frac{1}{2}$ (right).}
\label{fig:CG and DG}
\end{figure}

\subsection{Numerical approximation of the effective Hamiltonian}\label{Sec 4.2}

In this numerical experiment, we demonstrate the numerical scheme for the approximation of the effective Hamiltonian corresponding to the HJBI operator
\begin{align}\label{HJBI Experiment 2}
F:\R^2\times \R^{2\times 2}_{\mathrm{sym}}\rightarrow \R,\qquad F(y,R) := \inf_{\alpha\in \calA} \sup_{\beta\in \calB} \left\{ -A^{\alpha\beta}(y):R  - 1 \right\}  
\end{align}
with $\calA:=[1,2]$, $\calB:=[0,1]$, and the coefficient $A=A(y,\alpha,\beta):\R^2\times\calA\times\calB\rightarrow \R^{2\times 2}_{\mathrm{sym}}$ given by
\begin{align*}
A^{\alpha\beta}(y):= \left(a_0(y)+\alpha\beta a_1(y)\right) B,
\end{align*}
where we choose positive scalar functions $a_0,a_1:\R^2\rightarrow (0,\infty)$ and a symmetric positive definite matrix $B\in \R^{2\times 2}_{\mathrm{sym}}$ defined by 
\begin{align*}
B:=\begin{pmatrix}
2 & -1\\-1 & 4
\end{pmatrix},\qquad a_0\equiv 1,\qquad a_1(y):=\sin^2(2\pi y_1)\cos^2(2\pi y_2)+1.
\end{align*}

It is straightforward to check that this problem fits into the framework of Section \ref{Ass on coef} and in particular we have that the Cordes condition \eqref{Cordes for effH} holds with $\lambda = \frac{1}{4}$. This HJBI operator is chosen so that we know the effective Hamiltonian explicitly.

\begin{rmrk}
It can be checked that the HJBI operator \eqref{HJBI Experiment 2} can be rewritten as HJB operator
\begin{align*}
F(y,R)=\sup_{\beta\in \calB} \left\{ -\left(a_0(y)+\beta a_1(y)\right) B:R  - 1 \right\},\qquad (y,R)\in \R^2\times \R^{2\times 2}_{\mathrm{sym}},
\end{align*}
for which the effective Hamiltonian $H:\R^{2\times 2}_{\mathrm{sym}}\rightarrow \R$ is known explicitly and given by 
\begin{align*}
H(R):=\max\left\{ -\left(\int_Y \frac{1}{a_0}\right)^{-1} B:R - 1, -\left(\int_Y \frac{1}{a_0+a_1}\right)^{-1}B:R - 1\right\}
\end{align*}
for $R\in \R^{2\times 2}_{\mathrm{sym}}$; see \cite{FO18}.
\end{rmrk}

We make it our goal to approximate the effective Hamiltonian $H(R)$ at the point
\begin{align*}
R:=\begin{pmatrix}
-2 & 1\\1 & -3
\end{pmatrix},
\end{align*}
noting that the same problem was already used for the numerical experiments in \cite{GSS21}. As we have $B:R = -18 < 0$, the true effective Hamiltonian at this chosen point can be computed as
\begin{align}\label{H(R) true value}
H(R)=-\left(\int_Y \frac{1}{a_0+a_1}\right)^{-1}B:R - 1 = \frac{9\sqrt{6}\pi}{K(\frac{1}{3})}-1\approx 38.9429127,                
\end{align}
where $K$ denotes the complete elliptic integral of the first kind. 

In our numerical experiments, we approximate the true value of the effective Hamiltonian $H(R)$ from \eqref{H(R) true value} by $H_{\calT}^{\sigma}(R)$ as defined in \eqref{H_T^s defn}, where we use the $C^0$-IP finite element method ($s=1$) with $\theta = \frac{1}{2}$ to obtain the approximation $v_{\calT}^{\sigma}(\cdot\,;R)$ to the solution $v^{\sigma}(\cdot\,;R)$ of the cell $\sigma$-problem as described in Section \ref{Sec:Appr of cell sigma}. We denote the relative approximation error by
\begin{align*}
E_{\calT}^{\sigma}:=\frac{\lvert H_{\calT}^{\sigma}(R) - H(R) \rvert}{\lvert H(R)\rvert},\qquad H_{\calT}^{\sigma}(R):=-\sigma \int_Y v_{\calT}^{\sigma}(\cdot\,;R)
\end{align*}
and further write 
\begin{align*}
E^{\sigma}:= \frac{\lvert H^{\sigma}(R) - H(R) \rvert}{\lvert H(R)\rvert},\qquad H^{\sigma}(R):=-\sigma \int_Y v^{\sigma}(\cdot\,;R).
\end{align*}
Let us point out that the approximate corrector $v^{\sigma}(\cdot\,;R)$ and consequently the value of $E^{\sigma}$ is not known exactly, but we expect that $E^{\sigma}=\calO(\sigma)$ from Remark \ref{Rk:Improvement for HJB}.

\begin{figure}
\centering
\begin{subfigure}{.5\textwidth}
  \centering
  \includegraphics[width=\linewidth]{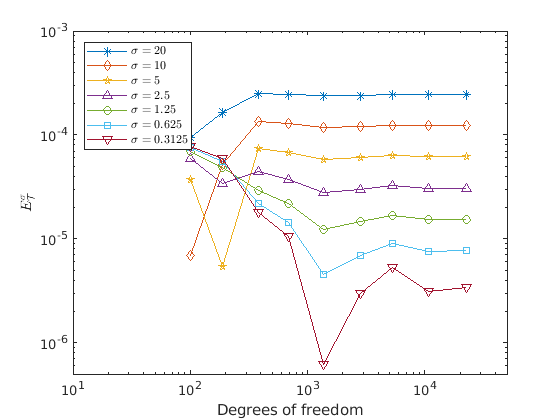}
\end{subfigure}%
\begin{subfigure}{.5\textwidth}
  \centering
  \includegraphics[width=\linewidth]{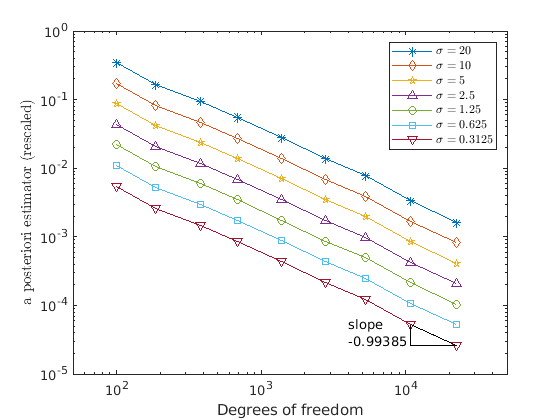}
\end{subfigure}\\
\begin{subfigure}{.5\textwidth}
  \centering
  \includegraphics[width=\linewidth]{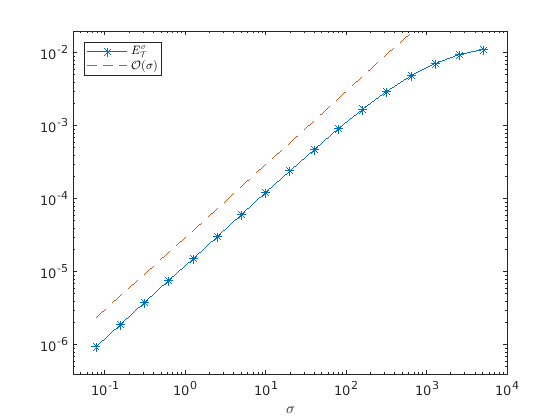}
\end{subfigure}%
\begin{subfigure}{.5\textwidth}
  \centering
  \includegraphics[width=\linewidth]{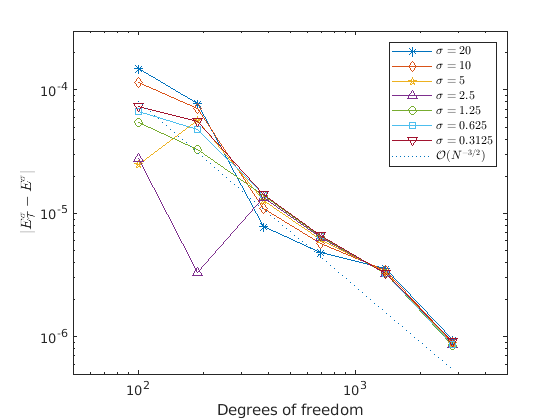}
\end{subfigure}
\caption{Top: Relative error $E_{\calT}^{\sigma}$ (left) and rescaled \textit{a posteriori} error estimator (right) for fixed $\sigma$ under mesh-refinement using $\bar{p}=3$. Bottom:  Relative error $E_{\calT}^{\sigma}$ using a fixed discretization with $\bar{p}=20$ (left), and illustration of the speed of convergence of $E_{\calT}^{\sigma}$ to $E^{\sigma}$ for fixed $\sigma$ under mesh-refinement using $\bar{p}=3$ (right).} 
\label{fig:effHam_vary_h}
\end{figure}

Figure \ref{fig:effHam_vary_h} (top) shows the behavior of the relative approximation error $E_{\calT}^{\sigma}$ under uniform mesh-refinement for fixed values of $\sigma$, and the corresponding \textit{a posteriori} error estimator $\eta_{\calT}(v_{\calT}^{\sigma})$ (re-scaled by a multiplicative constant $C_{\sigma}$ for illustration purposes) using polynomial degree $\bar{p}=3$. We observe that $E_{\calT}^{\sigma}$ converges to a constant, namely $E^{\sigma}$, and that the \textit{a posteriori} estimator is of order $\calO(N^{-1})$ as expected, where $N$ denotes the degrees of freedom. In particular, let us emphasize that this is the expected behavior and that the relative error for large numbers of degrees of freedom is entirely dominated by the $\sigma$-error $E^{\sigma}$. 

Figure \ref{fig:effHam_vary_h} (bottom) illustrates accurate approximations to the unknown values $E^{\sigma}$ for various values of the parameter $\sigma$, and the convergence rate for the convergence of $E_{\calT}^{\sigma}$ to the value $E^{\sigma}$. The accurate approximations to the values $E^{\sigma}$ are obtained using high polynomial degree $\bar{p}=20$ and a fixed triangulation (longest edge $\sqrt{2}\times 2^{-3}$), and we observe convergence of order $\calO(\sigma)$ as $\sigma$ tends to zero, as expected. Let us note that it is difficult to obtain accurate approximations for extremely small values of $\sigma$ as those lead to poorly conditioned discrete problems. We further observe that $\lvert E_{\calT}^{\sigma} - E^{\sigma}\rvert$ is of order $\calO(N^{-\frac{3}{2}})$ for fixed $\sigma$, where we take the unknown value $E^{\sigma}$ to be the previously obtained accurate approximation. This rate is higher than predicted by Remark \ref{Rk:Improvement for HJB}, which is based on an error estimate in the $\|\cdot\|_{\calT,\lambda_{\sigma}}$-norm and is therefore indeed expected to overestimate the error between $H_{\calT}^{\sigma}(R)$ and $H(R)$ related to the weaker integral functional from \eqref{H_T^s defn}.

\section{Conclusion}

In this work we introduced discontinuous Galerkin and $C^0$ interior penalty finite element schemes for the numerical approximation of periodic HJBI problems with an application to the approximation of effective Hamiltonians to ergodic HJBI operators. The first part of this paper was focused on periodic HJBI cell problems and we have performed rigorous \textit{a posteriori} and \textit{a priori} error analyses for a wide class of numerical schemes. In particular, the \textit{a posteriori} analysis was independent of the choice of numerical scheme. The second part of this paper was focused on the approximation of the effective Hamiltonian corresponding to ergodic HJBI operators. An approximation scheme for the effective Hamiltonian via a DG/$C^0$-IP approximation to approximate correctors was presented and rigorously analyzed. Finally, we presented numerical experiments illustrating the theoretical results and the performance of the numerical schemes.

\section*{Acknowledgments}

The work of TS was supported by the UK Engineering and Physical Sciences Research Council [EP/L015811/1]. We gratefully acknowledge helpful conversations with Professor Dietmar Gallistl (Friedrich-Schiller-Universit\"{a}t Jena) and Professor Endre S\"{u}li (University of Oxford) during the preparation of this work.

\bibliographystyle{plain}
\bibliography{refHJBI}

\end{document}